\documentclass[11pt]{article}

\usepackage[T1]{fontenc}
\usepackage[utf8]{inputenc}
\usepackage{lmodern}
\usepackage{amsmath,amssymb,amsthm,mathtools}
\usepackage{enumitem}
\usepackage{booktabs}
\usepackage{geometry}
\usepackage{hyperref}
\usepackage{xcolor}

\hypersetup{
  colorlinks=true,
  linkcolor=blue,
  citecolor=blue,
  urlcolor=blue
}

\newtheorem{theorem}{Theorem}[section]
\newtheorem{proposition}[theorem]{Proposition}
\newtheorem{lemma}[theorem]{Lemma}
\newtheorem{corollary}[theorem]{Corollary}

\newtheorem{question}[theorem]{Question}
\theoremstyle{definition}
\newtheorem{definition}[theorem]{Definition}
\theoremstyle{remark}
\newtheorem{remark}[theorem]{Remark}
\newtheorem*{introtheorem}{\textbf{Theorem}}

\newcommand{\Fp}{\mathbb F_p}
\newcommand{\Ftwo}{\mathcal F_2}
\newcommand{\PP}{\mathbb P}
\newcommand{\UU}{U}
\newcommand{\Ubar}{\overline U}
\newcommand{\cHH}{\mathrm H}

\title{Higher Massey Products in Demu\v{s}kin Variations:\\
Support Blocks, One-Relator Reduction, and a Five-Fold Vanishing Case}
\author{Marina Palaisti}
\date{ }

\begin{document}
\maketitle

\begin{abstract}
Blumer and Quadrelli introduced a family $\Ftwo$ of two-relator pro-$p$ groups obtained from a Demu\v{s}kin group by imposing the commutativity of two generators which are not paired in the Demu\v{s}kin relation. They proved the triple and quadruple Massey vanishing properties and asked whether the same holds in every length. We introduce the $z$-profile
\(v_h=(\alpha_h(z_1),\alpha_h(z_2))\in\Fp^2\) of a defined $n$-fold Massey product. Definability forces $\det(v_h,v_{h+1})=0$, so the nonzero profile entries decompose into support blocks carrying projective directions in $\PP^1(\Fp)$. After the known endpoint reduction, profiles with exactly $r$ blocks are counted by $\binom{n-1}{2r}$, and $r$ blocks first occur in length $2r+1$. We also prove an all-length conditional reduction for Dwyer's lifting problem: if the added commuting relator can be made exact in a lift, then the remaining central defect of the Demu\v{s}kin relator can be removed by an endpoint correction. The correction preserves the power term $x_1^q$
for every parameter allowed in $\Ftwo$. In length five, this yields vanishing whenever all three interior profile vectors $v_2,v_3,v_4$ are nonzero, for every prime $p$. The resulting classification identifies the remaining five-fold support types and the compatibility mechanisms governing them.
\end{abstract}

\noindent\textbf{Keywords.}
pro-$p$ groups; Demu\v{s}kin groups; Massey products; Galois cohomology;
unitriangular representations.

\noindent\textbf{2010 Mathematics Subject Classification.}
Primary 20E18; Secondary 20J06, 12G05.

\section{Introduction}

Demu\v{s}kin groups occupy a distinguished position in pro-$p$ group theory
and Galois cohomology. They are Poincar\'e duality pro-$p$ groups of
dimension two, their cup-product pairing is nondegenerate, and the maximal
pro-$p$ Galois group of a $p$-adic field containing the relevant roots of
unity is a Demu\v{s}kin group. Moreover, Demu\v{s}kin groups satisfy the
strong $n$-fold Massey vanishing property for every $n\geq3$; see, for
example, \cite{MinacTanMassey,BlumerCassellaQuadrelli}.

Blumer and Quadrelli recently introduced families of pro-$p$ groups which
are deliberately close to Demu\v{s}kin groups but fail $1$-cyclotomicity
\cite{BlumerQuadrelli}. The family relevant here consists of groups
\begin{equation}
G=
\left\langle
x_1,y_1,\ldots,x_d,y_d
\ \middle|\
x_1^q[x_1,y_1]\cdots[x_d,y_d]=1,\quad [z_1,z_2]=1
\right\rangle_{\widehat p},
\label{eq:presentation}
\end{equation}
where $d\geq2$, $q=p^k$ with
$k\in\{1,2,\ldots,\infty\}$, with $k\geq2$ if $p=2$, and with the
convention $p^\infty=0$. The generators $z_1,z_2$ are distinct and do not
form one of the paired sets $\{x_i,y_i\}$. We denote this family by
$\Ftwo$.

The groups in $\Ftwo$ retain several Galois-like properties, including
quadratic cohomology and low-length Massey vanishing, but they are not
maximal pro-$p$ Galois groups of fields containing a primitive $p$th root
of unity \cite{BlumerQuadrelli}. Blumer and Quadrelli proved the three-fold
and four-fold Massey vanishing properties and ask whether every
$G\in\Ftwo$ satisfies the $n$-fold Massey vanishing property for every
$n\geq3$ \cite[Question~5.13]{BlumerQuadrelli}.

The present paper develops two reductions of that question.

The first is combinatorial. For a defined product
\[
\langle\alpha_1,\ldots,\alpha_n\rangle\subseteq\cHH^2(G,\Fp)
\]
we attach the sequence
\[
v_h=(\alpha_h(z_1),\alpha_h(z_2))\in\Fp^2.
\]
The extra commuting relation contributes an independent alternating
cup-product component, and definability forces
\[
\det(v_h,v_{h+1})=0.
\]
Consequently consecutive nonzero profile vectors are projectively equal.
The nonzero positions split into maximal support blocks, each labelled by
a direction in $\PP^1(\Fp)$.

The second reduction is representation-theoretic. Dwyer's correspondence
turns a defined $n$-fold Massey product into a lifting problem
\[
G\longrightarrow \Ubar_{n+1}
\qquad\leadsto\qquad
G\longrightarrow \UU_{n+1},
\]
where $\Ubar_{n+1}=\UU_{n+1}/Z(\UU_{n+1})$. The two defining relators of
\eqref{eq:presentation} give two central defects. We prove the following
conditional all-length reduction: once the commuting defect has been
killed, the remaining Demu\v{s}kin defect can always be removed by an
endpoint correction. The $E_{2,n+1}$ correction appears already in the
final step of the low-length argument of Blumer--Quadrelli
\cite[proof of Proposition~5.12]{BlumerQuadrelli}; here its
power-preservation mechanism is isolated in a length-independent form.
Thus the two-relator lifting problem reduces to exactification of $[z_1,z_2]$.

This observation produces a concrete five-fold theorem. In length five,
if all three interior profile entries are nonzero, adjacent collinearity
forces a single projective direction. If that direction is a coordinate
axis, the product vanishes by an all-length theorem of Blumer--Quadrelli.
Otherwise, after replacing one of the two commuting matrices by an
auxiliary quotient of the form $C_2C_1^{-\kappa}$, the commuting problem
reduces to a normalized $\UU_6$ calculation. The calculation forces the
commuting relator to be exact, and the one-relator reduction then removes
the remaining Demu\v{s}kin defect.

The main results may be summarized as follows.

\begin{introtheorem}[Support-block hierarchy]
\label{thm:intro-block}
Let $G\in\Ftwo$, let $n\geq3$, and suppose
$\langle\alpha_1,\ldots,\alpha_n\rangle$ is defined. If the product is not
covered by the endpoint or coordinate-hyperplane vanishing theorems of
Blumer--Quadrelli, then:
\begin{enumerate}[label=\textup{(\roman*)}]
\item $v_1=v_n=0$;
\item every support block is contained in $\{2,\ldots,n-1\}$ and carries
one projective direction in $\PP^1(\Fp)$;
\item if there are $r$ support blocks, then
\[
r\leq\left\lfloor\frac{n-1}{2}\right\rfloor;
\]
\item the number of possible interior zero/nonzero patterns with exactly
$r$ blocks is
\[
\binom{n-1}{2r}.
\]
In particular, $r$ blocks first occur in length $2r+1$, where the unique
pattern is alternating.
\end{enumerate}
\end{introtheorem}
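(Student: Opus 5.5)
Part (i) comes from the endpoint vanishing theorem of Blumer--Quadrelli. The plan is to treat everything else as bookkeeping on the sequence $v_1,\dots,v_n$ once the collinearity constraint $\det(v_h,v_{h+1})=0$ is established, so the first step is that constraint. Take a defining system for $\langle\alpha_1,\ldots,\alpha_n\rangle$, or equivalently a Dwyer lift $G\to\Ubar_{n+1}$, and look at the degree-two defining relations $\alpha_h\cup\alpha_{h+1}=-\delta(\text{cochain})$ arising from the superdiagonal-plus-one entries. Since $G$ has quadratic cohomology, $\cHH^2(G,\Fp)$ is dual to the relator space. Evaluating a cup product on the commuting relator $[z_1,z_2]$ gives the alternating form
\[
(\alpha\cup\beta)([z_1,z_2])=\alpha(z_1)\beta(z_2)-\alpha(z_2)\beta(z_1)=\det\bigl(v(\alpha),v(\beta)\bigr).
\]
This coordinate is independent of the Demu\v{s}kin coordinate, because $\{z_1,z_2\}$ is not a paired set. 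Definability forces each $\alpha_h\cup\alpha_{h+1}=0$ in $\cHH^2$, so in particular its $[z_1,z_2]$-component vanishes. This gives $\det(v_h,v_{h+1})=0$. The care needed here is only in checking that, with $q$ arbitrary (including $p=2$, $k\ge2$), the $[z_1,z_2]$-coordinate of the cup product is exactly this determinant, with no contribution from the power term $x_1^q$. The power term only affects Bockstein-type components on $x_1$.

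For (i) and (ii): if the product is not covered by the endpoint theorem, then $v_1=v_n=0$. I would read this off the Blumer--Quadrelli criterion, stated as nonvanishing of $\alpha_1$ or $\alpha_n$ on $z_1,z_2$ allowing vanishing. If needed, I would restate it as the contrapositive. Define a support block as a maximal run of consecutive indices $h$ with $v_h\neq0$. By (i), every block lies in $\{2,\ldots,n-1\}$. Within a block, consecutive vectors are nonzero and have determinant zero, hence are proportional. By transitivity, all vectors in the block define one point of $\PP^1(\Fp)$. The coordinate-hyperplane hypothesis is not needed for (ii). It only serves to exclude the axis directions later, and I would note this explicitly.

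For (iii) and (iv), the statement becomes purely combinatorial. I count subsets $S\subseteq\{2,\ldots,n-1\}$, a set of $m=n-2$ positions, whose zero/nonzero pattern has exactly $r$ maximal runs. I would encode such a pattern by the $2r$ boundaries of its runs. Each run $[a,b]$ contributes a start boundary between $a-1$ and $a$ and an end boundary between $b$ and $b+1$. These boundaries are cut points among the $n-1$ gaps between consecutive indices of $\{1,\ldots,n\}$. Because $v_1=v_n=0$, runs are separated by at least one zero and never touch the endpoints. The resulting $2r$ cut points are therefore distinct and strictly increasing, and every choice of $2r$ distinct gaps arises from exactly one pattern, namely alternating starts and ends. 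This bijection gives $\binom{n-1}{2r}$. It is nonzero exactly when $2r\le n-1$, which yields (iii). Equality $n=2r+1$ forces all gaps to be used, giving the unique alternating pattern $0,\ast,0,\ast,\ldots,\ast,0$.

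The main obstacle is the first step: justifying rigorously that definability yields the determinant condition on the $[z_1,z_2]$-coordinate. This requires an explicit identification of $\cHH^2(G,\Fp)$ with the dual of $R/R^p[R,F]$ and of cup products via the Fox-derivative/transgression formula. I would cite the quadratic-cohomology description from Blumer--Quadrelli rather than recompute it. The combinatorics is routine by comparison.
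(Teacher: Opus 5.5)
Your proposal is correct and follows essentially the same route as the paper: the $\zeta$-component of $\alpha_h\smile\alpha_{h+1}=0$, which you read dually as evaluation on the relator $[z_1,z_2]$, gives $\det(v_h,v_{h+1})=0$. The negated endpoint theorem then gives $v_1=v_n=0$, transitivity of proportionality gives one direction per block, and the patterns are counted by choosing $2r$ of the $n-1$ gaps. As in the paper's Theorem~\ref{thm:block-count}, the count should be read for $r\geq1$, since the all-zero pattern ($r=0$) is already excluded by the coordinate-hyperplane theorem.
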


\begin{introtheorem}[One-relator reduction]
\label{thm:intro-one-relator}
Let $G\in\Ftwo$, let $n\geq3$, and let
$\langle\alpha_1,\ldots,\alpha_n\rangle$ be defined. Assume the product is
not already covered by the endpoint vanishing theorem. If a Dwyer
representation admits lifts of the generator matrices for which the
commuting relator $[z_1,z_2]$ is exact, then the product contains $0$.
\end{introtheorem}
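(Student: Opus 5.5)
We work throughout in Dwyer's framework. A defined product $\langle\alpha_1,\ldots,\alpha_n\rangle$ corresponds to a continuous homomorphism $\bar\rho\colon G\to\Ubar_{n+1}$ whose near-diagonal entries are $\alpha_1,\ldots,\alpha_n$. The product contains $0$ exactly when $\bar\rho$ lifts to a homomorphism $G\to\UU_{n+1}$.

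The strategy is as follows. Take lifts $X_i,Y_i\in\UU_{n+1}$ of $\bar\rho(x_i),\bar\rho(y_i)$ for which, by hypothesis, the commutator $[Z_1,Z_2]$ is trivial. Then evaluate the Demu\v{s}kin relator on these lifts:
\[
R=X_1^q[X_1,Y_1]\cdots[X_d,Y_d].
\]
Since $\bar\rho$ is a homomorphism, $R$ lies in $Z(\UU_{n+1})=\langle E_{1,n+1}\rangle\cong\Fp$, so $R=E_{1,n+1}^{c}$ for some $c\in\Fp$. We then modify one generator matrix by a central-up-to-corner correction so that $c$ becomes $0$, while $[Z_1,Z_2]=1$ is preserved. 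With both relators exact, the free pro-$p$ universal property gives a homomorphism $G\to\UU_{n+1}$ lifting $\bar\rho$, and so $0$ lies in the product.

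The correction uses the endpoint elements $E_{1,n}$ and $E_{2,n+1}$. Since the product is not covered by the endpoint vanishing theorem, we expect some generator $w\in\{x_i,y_i\}$ with $\alpha_1(w)\neq0$ or $\alpha_n(w)\neq0$, and we choose its Demu\v{s}kin partner $w'$ accordingly.

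1. Modify $W'$ to $W'E_{2,n+1}^{t}$ (or $W'E_{1,n}^{t}$). Modulo the center, $E_{2,n+1}$ commutes with everything. Its commutator with a matrix $M$ is $E_{1,n+1}^{\pm t\,M_{12}}$, and $M_{12}=\alpha_1$ evaluated on the generator. So the correction changes $[W,W']$ by $E_{1,n+1}^{\pm t\alpha_1(w)}$ and leaves every other commutator in $R$ unchanged except for central terms.
2. Because the new factor is central modulo $Z$, each commutator in which $W'$ appears picks up a factor $E_{1,n+1}^{\pm t\alpha_1(\cdot)}$. Summing these, $c$ shifts by a linear function of $t$ whose coefficient is $\pm\alpha_1(w)$, or $\pm\alpha_n(w)$ for $E_{1,n}$. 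Solving $c\pm t\,\alpha_1(w)=0$ gives the required $t$.
3. Choose $w'$ with $w'\notin\{z_1,z_2\}$ if possible, or else check that the correction preserves $[Z_1,Z_2]=1$: the new commutator changes by $E_{1,n+1}^{\pm t\alpha_1(z_j)}$. We use the fact that $z_1,z_2$ are not paired to select a free endpoint generator, or we adjust the correction so these contributions cancel.
4. Check the power term. Conjugating or modifying $X_1$ changes $X_1^q$ by $E_{1,n+1}^{q t(\cdots)}$ plus binomial cross terms. In characteristic $p$ with $q=p^k$, these terms vanish whenever $q\ge n+1$ or the relevant binomial coefficients $\binom{q}{j}$ are divisible by $p$. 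We will either correct only generators other than $x_1$, or, if $x_1$ must be corrected, verify that the $E_{2,n+1}$ correction multiplies $X_1^q$ by $E_{1,n+1}^{q t\alpha_1(x_1)}$. That factor vanishes mod $p$ when $k\ge1$ and is absent when $q=0$.

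We expect step 4, together with the $p=2$ restriction $k\ge2$, to be the main obstacle. The first attempt will be to write $X_1E_{2,n+1}^t=X_1\cdot N$ with $N$ commuting with $X_1$ modulo the center, and then apply the Hall–Petrescu formula. Because $[X_1,N]$ is central, the formula reduces to
\[
(X_1N)^q=X_1^qN^q[N,X_1]^{\binom q2}.
\]
Here $N^q=1$ for $q\ne0$, and $\binom q2\equiv0 \pmod p$ unless $p=2$ and $q=2$, which is exactly the case excluded in $\Ftwo$.
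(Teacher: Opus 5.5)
Your route is the paper's own. You correct the Demu\v{s}kin partner of a generator $s$ with $\alpha_1(s)\neq0$ by $I+tE_{2,n+1}$, read off the central shift $\pm t\alpha_1(s)$ in the commutator containing $\{s,t\}$, and control the power term with the class-two formula. Your closing computation $(X_1N)^q=X_1^qN^q[N,X_1]^{\binom q2}$, with $\binom q2\equiv0\pmod p$ except when $p=q=2$, is exactly Lemma~\ref{lem:power-preservation}. It also supersedes the earlier claim in your step~4 that the factor is $E_{1,n+1}^{qt\alpha_1(x_1)}$. That claim is wrong: the exponent is $-\binom q2 t\alpha_1(x_1)$, which is nonzero for $p=q=2$.

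\textbf{Gap: the endpoint hypothesis is misread.} This leaves two holes.
\begin{enumerate}
\item Not being covered by Proposition~\ref{prop:endpoint} means $\alpha_1(z_i)=\alpha_n(z_i)=0$ for $i=1,2$. It does not give a generator $w$ with $\alpha_1(w)\neq0$ or $\alpha_n(w)\neq0$; that needs $\alpha_1\neq0$ or $\alpha_n\neq0$. When $\alpha_1=\alpha_n=0$, every correction of your type (by $E_{2,n+1}$ or $E_{1,n}$) leaves the corner untouched. That case must be disposed of separately by the zero-entry vanishing lemma \cite[Lemma~2.5(a)]{BlumerQuadrelli}, as the paper does.
\item The vanishing of $\alpha_1$ on $z_1,z_2$ is precisely what closes your step~3, which you leave hedged. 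If the corrected partner is $z_1$, you correctly find that $[Z_1,Z_2]$ changes by a central factor with exponent $\pm t\alpha_1(z_2)$. This is zero by hypothesis. So you need neither to choose $w'\notin\{z_1,z_2\}$ nor to adjust the correction to cancel contributions. The same holds via $\alpha_n(z_i)=0$ for your $E_{1,n}$ variant.
\end{enumerate}

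\textbf{A smaller point.} You should also say why the shift in $[W,W']$ is exactly $\pm t\alpha_1(w)$. Write $[W,W'K]=[W,K][W,W']^K$. Since $[W,W']$ has zero first superdiagonal, conjugation by $K=I+tE_{2,n+1}$ fixes it.

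With these points made explicit, your argument is complete and coincides with Lemma~\ref{lem:endpoint-correction} and Theorem~\ref{thm:one-relator}.
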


\begin{introtheorem}[Full interior support in length five]
\label{thm:intro-five}
Let $G\in\Ftwo$. If
$\langle\alpha_1,\ldots,\alpha_5\rangle$ is defined and
\[
v_2\neq0,\qquad v_3\neq0,\qquad v_4\neq0,
\]
then the product contains $0$.
\end{introtheorem}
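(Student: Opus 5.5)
We follow the route sketched in the introduction. First reduce using the Blumer--Quadrelli endpoint theorem: if $v_1\neq0$ or $v_5\neq0$, or more generally the product falls under the endpoint vanishing criterion, we are done. So assume $v_1=v_5=0$ while $v_2,v_3,v_4\neq0$. The definability constraint $\det(v_h,v_{h+1})=0$ then makes $v_2,v_3,v_4$ collinear. Hence there is a single nonzero $w\in\Fp^2$ and scalars $\lambda_2,\lambda_3,\lambda_4\in\Fp^\times$ with $v_h=\lambda_h w$. If $[w]\in\PP^1(\Fp)$ is a coordinate axis, then one of the $z_i$ is annihilated by all $\alpha_h$, and the coordinate-hyperplane vanishing theorem of Blumer--Quadrelli applies. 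It remains to treat $w=(1,\kappa)$ with $\kappa\neq0$, up to swapping $z_1,z_2$.

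In this remaining case, take a Dwyer representation $\rho\colon G\to\Ubar_6$ attached to the defined product. Choose arbitrary lifts of the generator images to $\UU_6$, and write $C_1,C_2$ for the lifts of $\rho(z_1),\rho(z_2)$. The goal is to show the lifts can be modified by central-type corrections so that $[C_1,C_2]=1$ in $\UU_6$. Theorem~\ref{thm:intro-one-relator} then kills the remaining Demu\v{s}kin defect by an endpoint $E_{2,6}$ correction, and the product contains $0$.

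To arrange exactness, replace $C_2$ by $D=C_2C_1^{-\kappa}$. Since $[C_1,D]=[C_1,C_2]$ up to terms that are controlled (indeed $[C_1,C_2C_1^{-\kappa}]=[C_1,C_2]$ exactly, because $C_1$ commutes with its own powers), it suffices to make $[C_1,D]=1$. The superdiagonal of $D$ is $v_h(z_2)-\kappa v_h(z_1)=0$ for all $h$. So $D$ lies in the subgroup with vanishing first superdiagonal, and its second-superdiagonal entries are determined by the defining system, up to the ambiguity in lifts. The superdiagonal of $C_1$ is $(0,\lambda_2,\lambda_3,\lambda_4,0)$. We then compute the $(1,6)$ entry of $[C_1,D]$ in $\UU_6$. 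Because $D$ starts in degree $2$ and $C_1$ has vanishing first and last superdiagonal entries, this entry involves only a few products, of the form $c_{12}d_{2,6}+c_{1,3}d_{3,6}-d_{1,4}c_{4,6}-\dots$ after simplification.

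The main obstacle is this normalized $\UU_6$ calculation. We must show that the defect entry either vanishes identically, using the relations that $\rho$ is a homomorphism on $\Ubar_6$ (that is, $[C_1,D]$ is already central), or can be cancelled by adjusting free entries. The free entries available are the $(1,j)$ and $(i,6)$ entries of the lifts of generators not among $z_1,z_2$, and the top-corner entries of $C_1$ and $C_2$. We would first normalize by conjugating with a unitriangular matrix so that $C_1$ has a standard form, for example with entries $c_{1,3}=c_{4,6}=0$. Next we use the centrality relations $[C_1,D]_{i,j}=0$ for $j-i\le4$ to express the entries of $D$ near the corner in terms of the $\lambda_h$. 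The hope is that these relations force the corner term to be a multiple of $\lambda_2$ or $\lambda_4$ times a free entry of $D$, namely $d_{2,6}$ or $d_{1,5}$. Since $\lambda_2,\lambda_4\neq0$, we can then solve for a correction of that entry, a change that does not alter the image in $\Ubar_6$. The nonvanishing of the interior entries is used exactly here. Finally we check that the correction preserves the power term $x_1^q$, which is exactly what the one-relator theorem provides.
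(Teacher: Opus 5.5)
Your reduction steps (endpoint and coordinate-axis cases, the substitution $D=C_2C_1^{-\kappa}$, and the final appeal to the one-relator reduction) match the paper. There is, however, a genuine gap at exactly the step you call the main obstacle. The normalized $U_6$ computation is left as a hope, and the mechanism you predict for it cannot work.

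Write $C_1=I+A$ and $D=I+B$, with $A_{ij}=a_{ij}$ and $B_{ij}=b_{ij}$ for $j-i\geq2$. The corner entry of $C_1D-DC_1$ is
\[
\gamma=a_{13}b_{36}+a_{14}b_{46}-a_{36}b_{13}-a_{46}b_{14}.
\]
The terms that would involve $b_{26}$ or $b_{15}$ carry the coefficients $(C_1)_{1,2}=\alpha_1(z_1)$ and $(C_1)_{5,6}=\alpha_5(z_1)$. Both are zero after the endpoint reduction. Equivalently, by Lemma~\ref{lem:endpoint-calculus}(ii), replacing $D$ by $D(I+uE_{2,6})$ changes $[C_1,D]$ by $I+(C_1)_{1,2}uE_{1,6}=I$. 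So $\gamma$ is not ``a multiple of $\lambda_2$ or $\lambda_4$ times $d_{2,6}$ or $d_{1,5}$''.

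There are two further problems with the proposed correction. Altering $d_{2,6}$ or $d_{1,5}$ does change the class of the lift in $\overline{U}_6$, because the center of $U_6$ is only the $E_{1,6}$ line. The other free entries you list (corner entries of $C_1,C_2$, border entries of the remaining generators) do not enter $[C_1,C_2]$ at all. So no correction of the kind you describe is available.

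None is needed. The missing idea is that, when $\lambda_2,\lambda_3,\lambda_4\neq0$, centrality alone forces $\gamma=0$. You were right that the centrality relations are the tool; what you need to see is that they kill the corner outright. The noncentral entries of $C_1D-DC_1$ must vanish, which gives
\[
b_{13}\lambda_3=0,\qquad b_{46}\lambda_3=0,\qquad \lambda_2b_{35}-\lambda_4b_{24}=0,
\]
\[
a_{13}b_{35}-a_{35}b_{13}-\lambda_4b_{14}=0,\qquad \lambda_2b_{36}+a_{24}b_{46}-a_{46}b_{24}=0.
\]
Hence $b_{13}=b_{46}=0$, $b_{14}=a_{13}b_{35}/\lambda_4$ and $b_{36}=a_{46}b_{35}/\lambda_4$. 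Substituting into $\gamma$ gives $\gamma=0$ identically. Thus $[C_1,D]=I$, so $C_1$ commutes with $C_2=DC_1^{\kappa}$, and the one-relator theorem finishes the proof.

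Your proposed normalization would have reached this even faster. Conjugate the whole lift by $I+tE_{1,2}$ and $I+sE_{5,6}$; this is possible because $\lambda_2,\lambda_4\neq0$. It makes $a_{13}=a_{46}=0$ without changing superdiagonals or relator values. Then $\gamma=a_{14}b_{46}-a_{36}b_{13}$, which vanishes by the two $\lambda_3$-equations. But you have to carry out that computation and recognize that the defect vanishes automatically, rather than aim for a correction.
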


The five-fold status after the known endpoint reduction is summarized
in Table~\ref{tab:intro-five-status}. Coordinate-hyperplane instances are
already covered by Proposition~\ref{prop:hyperplane}; the table records
what remains for the indicated support pattern.

\begin{table}[ht]
\centering
\small
\begin{tabular}{ccl}
\toprule
interior support & blocks & status in this paper \\
\midrule
$\varnothing$ & $0$ & vanishes by coordinate-hyperplane reduction \\
$\{2\}$ & $1$ & unresolved singleton case \\
$\{3\}$ & $1$ & normalized commuting defect removable \\
$\{4\}$ & $1$ & unresolved singleton case \\
$\{2,3\}$ & $1$ & normalized commuting defect removable \\
$\{3,4\}$ & $1$ & normalized commuting defect removable \\
$\{2,3,4\}$ & $1$ & vanishes by Theorem~\ref{thm:intro-five} \\
$\{2,4\}$ & $2$ & separated two-block problem \\
\bottomrule
\end{tabular}
\caption{Length-five support types after endpoint reduction.}
\label{tab:intro-five-status}
\end{table}

Consequently, full five-fold vanishing for $\Ftwo$ is reduced here to the
six support types
\[
\{2\},\ \{3\},\ \{4\},\ \{2,3\},\ \{3,4\},\ \{2,4\}.
\]
For $\{3\},\{2,3\},\{3,4\}$ we identify normalized-model corrections; the
remaining issue is compatibility with the full Demu\v{s}kin presentation.

The arithmetic specialization at the end of the paper records a natural
field-parametrized subfamily obtained from maximal pro-$p$ Galois groups
of $p$-adic fields. This produces a natural field-parametrized subfamily of the Blumer–Quadrelli variations. For comparison, Maire--Min\'a\v{c}--Ramakrishna--T\^an
prove strong all-length vanishing for number fields $K$ with
$\zeta_p\notin K$ and $p$ odd \cite{MaireMinacRamakrishnaTan}.

\section{Preliminaries}

\subsection{The family \texorpdfstring{$\Ftwo$}{F2}}

Fix a prime $p$. Let $G\in\Ftwo$ have presentation
\eqref{eq:presentation}, with the parameter convention stated above, and put
\[
X_G=\{x_1,y_1,\ldots,x_d,y_d\}.
\]
For $i=1,2$, let $t_i$ denote the Demu\v{s}kin partner of $z_i$, so that
\[
\{z_i,t_i\}=\{x_{j_i},y_{j_i}\}.
\]

As computed by Blumer and Quadrelli
\cite[\S5.1]{BlumerQuadrelli}, the mod-$p$ cohomology satisfies
\[
\dim_{\Fp}\cHH^2(G,\Fp)=2.
\]
With respect to the standard dual basis of $\cHH^1(G,\Fp)$, one may
choose a basis $\{\eta,\zeta\}$ of $\cHH^2(G,\Fp)$ such that $\eta$
records the Demu\v{s}kin cup-product component and
\[
\zeta=z_1^*\smile z_2^*.
\]
Thus, for $\alpha,\beta\in\cHH^1(G,\Fp)$, if
\[
v(\alpha)=(\alpha(z_1),\alpha(z_2)),
\]
then the coefficient of $\zeta$ in $\alpha\smile\beta$ is
\begin{equation}
\omega_z(\alpha,\beta)
=
\alpha(z_1)\beta(z_2)-\alpha(z_2)\beta(z_1)
=
\det(v(\alpha),v(\beta)).
\label{eq:zdet}
\end{equation}
This is the only part of the cup-product formula needed for the
support-block reduction.

\subsection{Dwyer's criterion}

Throughout the paper, $\cHH^\bullet(G,\Fp)$ denotes continuous
cohomology with trivial discrete coefficients. The groups
$\UU_{n+1}=\UU_{n+1}(\Fp)$ and their quotients are finite $p$-groups,
viewed as profinite groups with the discrete topology; every
homomorphism of pro-$p$ groups appearing below is continuous.

The center of $\UU_{n+1}$ is
\[
Z(\UU_{n+1})
=
\{I_{n+1}+aE_{1,n+1}:a\in\Fp\},
\]
and we put
\[
\Ubar_{n+1}=\UU_{n+1}/Z(\UU_{n+1}).
\]

We use Dwyer's correspondence in the pro-$p$ setting; see
\cite{Dwyer} and \cite[\S2.3]{BlumerQuadrelli}.

\begin{proposition}[Dwyer]
\label{prop:dwyer}
Let $G$ be a pro-$p$ group and let
$\alpha_1,\ldots,\alpha_n\in\cHH^1(G,\Fp)$.
\begin{enumerate}[label=\textup{(\alph*)}]
\item The product $\langle\alpha_1,\ldots,\alpha_n\rangle$ is defined
if and only if there exists a continuous homomorphism
\[
\bar\rho:G\longrightarrow\Ubar_{n+1}
\]
whose $(h,h+1)$ entry is $\alpha_h$ for every $h$.
\item The product contains $0$ if and only if there exists a continuous
homomorphism
\[
\rho:G\longrightarrow\UU_{n+1}
\]
whose $(h,h+1)$ entry is $\alpha_h$ for every $h$.
\end{enumerate}
\end{proposition}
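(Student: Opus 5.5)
The plan is to prove Proposition~\ref{prop:dwyer} by the standard translation between defining systems and upper unitriangular matrices, taking care that everything is continuous and valued in finite discrete groups. For a family of continuous cochains $(a_{ij})_{1\le i<j\le n+1}$ with $a_{i,i+1}$ a cocycle representing $\alpha_i$, I will form the map
\[
\rho(g)=I_{n+1}+\sum_{i<j}a_{ij}(g)E_{i,j}.
\]
The key identity to establish is that, entrywise, $\rho(gh)=\rho(g)\rho(h)$ at position $(i,j)$ is equivalent to
\[
\delta a_{ij}=-\sum_{i<k<j}a_{ik}\smile a_{kj},
\]
with signs matching the paper's (or Dwyer's) convention for defining systems. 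I will fix this convention first, replacing $a_{ij}$ by $-a_{ij}$ for $j-i$ even if necessary. Then I will check that the $(i,j)$ entry of $\rho(g)\rho(h)$ equals $a_{ij}(g)+a_{ij}(h)+\sum_k a_{ik}(g)a_{kj}(h)$, which is exactly the cochain formula for $a_{ij}(gh)$ once the coboundary equation holds.

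For (a), a defining system consists of the entries with $(i,j)\ne(1,n+1)$, subject to the coboundary equations for all those pairs. Since $E_{1,n+1}$ spans the center, the product in $\Ubar_{n+1}$ only tests the entries $(i,j)\ne(1,n+1)$. So a defining system is the same thing as a continuous homomorphism $\bar\rho:G\to\Ubar_{n+1}$ with the prescribed superdiagonal.

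The step needing most care is the converse: a homomorphism $\bar\rho$ must be realised by cochains. I will choose a continuous set-theoretic section $s:\Ubar_{n+1}\to\UU_{n+1}$ with zero $(1,n+1)$ entry. This is automatic here, because both groups are finite and discrete. Reading off the entries of $s\circ\bar\rho$ then gives continuous cochains. One further point: the superdiagonal entries of a homomorphism are automatically homomorphisms $G\to\Fp$, that is, $1$-cocycles, since the superdiagonal gives a homomorphism $\UU_{n+1}\to\Fp^n$. Different cocycle representatives of the $\alpha_i$ are absorbed by conjugating with a diagonal-adjacent unipotent matrix, so "entry is $\alpha_h$" may be read at the level of classes.

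For (b), the product's value attached to a defining system is the class of $\sum_{1<k<n+1}a_{1k}\smile a_{k,n+1}$. It contains $0$ exactly when this class equals $-\delta a_{1,n+1}$ for some cochain $a_{1,n+1}$, that is, when the full system extends. By the identity above, this is equivalent to $s\circ\bar\rho$, corrected at the corner entry, being a homomorphism $\rho:G\to\UU_{n+1}$. Conversely, any $\rho$ restricts to a defining system whose value is $0$.

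The main obstacle is purely bookkeeping: matching sign conventions, and checking that the obstruction $2$-cocycle is really the corner defect $\rho(g)\rho(h)\rho(gh)^{-1}$. I will verify the corner entry computation directly. In the pro-$p$ setting I expect no further difficulty beyond citing \cite{Dwyer} and \cite[\S2.3]{BlumerQuadrelli}.
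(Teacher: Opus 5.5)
Your argument is correct and is the standard proof of Dwyer's correspondence. The paper gives no proof of its own and simply cites \cite{Dwyer} and \cite[\S2.3]{BlumerQuadrelli}; your sign fix $a_{ij}\mapsto(-1)^{j-i+1}a_{ij}$ and your identification of the corner defect with $\sum_k a_{1k}\smile a_{k,n+1}$ both check out. One small correction: with trivial coefficients $B^1(G,\Fp)=0$, so each $\alpha_h$ is its own unique cocycle and there is nothing to absorb. Conjugation by elements of $\UU_{n+1}$ never changes the first superdiagonal anyway, so the remark about different cocycle representatives should simply be dropped.
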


A homomorphism $\bar\rho$ as in part~(a) will be called a
\emph{Dwyer representation}. Definability implies
\begin{equation}
\alpha_h\smile\alpha_{h+1}=0,
\qquad h=1,\ldots,n-1.
\label{eq:adjcup}
\end{equation}
We will also use that if $\alpha_h=0$ for some $h$, then a defined
$n$-fold Massey product contains $0$; see
\cite[Lemma~2.5(a)]{BlumerQuadrelli}.

\subsection{The all-length reductions of Blumer--Quadrelli}

The following two results are Proposition~5.10 and Proposition~5.11 of
\cite{BlumerQuadrelli}. We state them in the form used below.

\begin{proposition}
\label{prop:endpoint}
Suppose $\langle\alpha_1,\ldots,\alpha_n\rangle$ is defined. If
\[
\alpha_1(z_i)\neq0
\qquad\text{or}\qquad
\alpha_n(z_i)\neq0
\]
for some $i\in\{1,2\}$, then the product contains $0$.
\end{proposition}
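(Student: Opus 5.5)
The plan is to use Dwyer's criterion, Proposition~\ref{prop:dwyer}. By part~(a), a defined product gives a Dwyer representation $\bar\rho:G\to\Ubar_{n+1}$. Lift the image of each generator $g\in X_G$ to a matrix $\widetilde g\in\UU_{n+1}$. Since $\bar\rho$ kills both relators, their values in $\UU_{n+1}$ are central:
\[
\widetilde x_1^{\,q}[\widetilde x_1,\widetilde y_1]\cdots[\widetilde x_d,\widetilde y_d]=I+aE_{1,n+1},
\qquad
[\widetilde z_1,\widetilde z_2]=I+bE_{1,n+1}.
\]
By part~(b) and the universal property of the presentation \eqref{eq:presentation}, it suffices to modify the lifts so that $a=b=0$. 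Central modifications are useless here: they do not change commutators, and they change $\widetilde x_1^{\,q}$ only by a $q$-th power of a central element, which is trivial. So I would use the non-central elementary corrections $M_c=I+cE_{2,n+1}$ when $\alpha_1(z_i)\neq0$, and $N_c=I+cE_{1,n}$ when $\alpha_n(z_i)\neq0$. I treat the case $\alpha_1(z_1)\neq0$; the case of $z_2$ is symmetric, and the $\alpha_n$ cases are the transpose-symmetric version with $E_{1,n}$.

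The key computation is the following. For any $A\in\UU_{n+1}$ with $(1,2)$ entry $\alpha_1(g)$,
\[
AM_cA^{-1}M_c^{-1}=I+c\,\alpha_1(g)E_{1,n+1},
\]
because $E_{2,n+1}$ commutes with $E_{1,n+1}$ and all products of these strictly upper triangular terms vanish beyond the first-order term. Consequently, replacing $\widetilde h$ by $\widetilde hM_c$ inside a commutator $[\widetilde g,\widetilde h]$ multiplies that commutator by the central element $I\pm c\,\alpha_1(g)E_{1,n+1}$. Since this factor is central, the effect on the whole relator word is this same additive shift.

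I would correct in two steps, which forms a triangular system.
\begin{enumerate}[label=\textup{(\arabic*)}]
\item Replace $\widetilde z_2$ by $\widetilde z_2M_c$. This shifts $b$ by $\pm c\,\alpha_1(z_1)$, so choosing $c=\pm b/\alpha_1(z_1)$ kills the commuting defect. It also changes $a$ by some amount, since $z_2$ occurs in the Demu\v{s}kin relator, but this does not matter.
\item Let $t_1$ be the Demu\v{s}kin partner of $z_1$, and replace $\widetilde t_1$ by $\widetilde t_1M_{c'}$. Because $z_1,z_2$ are not paired, $t_1\notin\{z_1,z_2\}$, so the commuting relator is untouched. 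The Demu\v{s}kin relator shifts through the factor $[\widetilde z_1,\widetilde t_1]$ (or its inverse orientation) by $\pm c'\alpha_1(z_1)$, and I choose $c'$ to cancel the new $a$.
\end{enumerate}

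The main obstacle is the power term when a corrected generator equals $x_1$, i.e.\ $t_1=x_1$ or $z_2=x_1$. Then $\widetilde x_1^{\,q}$ is replaced by $(\widetilde x_1M)^q$. Expanding,
\[
(\widetilde x_1M)^q=\widetilde x_1^{\,q}\prod_{j}\bigl(\widetilde x_1^{\,j}M\widetilde x_1^{-j}\bigr)
\]
up to reordering. Here $\widetilde x_1^{\,j}M\widetilde x_1^{-j}=M\,(I+jc\,\alpha_1(x_1)E_{1,n+1})$, and these factors commute with each other. So the extra factor is
\[
M^q\,\bigl(I+\tbinom{q}{2}c\,\alpha_1(x_1)E_{1,n+1}\bigr)=I,
\]
since $p\mid q$, and $\binom q2\equiv0\pmod p$: for $p$ odd this is automatic, and for $p=2$ it uses $q\ge4$. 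If $q=0$ there is no power term at all. I expect this check to be exactly where the hypothesis $k\ge2$ for $p=2$ is used. Granting it, both defects vanish, and Proposition~\ref{prop:dwyer}(b) gives $0\in\langle\alpha_1,\ldots,\alpha_n\rangle$.
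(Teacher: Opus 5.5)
Your proof is correct. The paper itself gives no argument for this statement: it imports it as Proposition~5.10 of Blumer--Quadrelli.

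What you have written is a self-contained proof that uses exactly the calculus the paper isolates in Section~4 (Lemmas~\ref{lem:endpoint-calculus} and~\ref{lem:power-preservation}):
\begin{itemize}
\item the central commutator $[S,K(u)]=I+auE_{1,n+1}$;
\item the commutation of $K(u)$ with $\UU_{n+1}^{(2)}$, which is implicit in your claim that correcting $\widetilde h$ multiplies $[\widetilde g,\widetilde h]$ by a central factor, since it makes the conjugate $[\widetilde g,\widetilde h]^{M_c}$ equal to $[\widetilde g,\widetilde h]$;
\item power preservation via $p\mid\binom q2$.
\end{itemize}

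The genuine difference from Lemma~\ref{lem:endpoint-correction} is the case it covers. That lemma needs $\alpha_1(z_1)=\alpha_1(z_2)=0$, both to find $s\notin\{z_1,z_2\}$ and to keep $[M_{z_1},M_{z_2}]$ exact when the corrected partner is some $z_i$. You instead exploit $\alpha_1(z_1)\neq0$: you first kill the commuting defect by correcting $z_2$, and then correct $t_1$, which avoids $\{z_1,z_2\}$ precisely because $z_1,z_2$ are unpaired. Your route therefore makes Proposition~\ref{prop:endpoint} internal to the paper. It also shows that in the endpoint case exactification of $[z_1,z_2]$ costs a single $E_{2,n+1}$ (or $E_{1,n}$) correction, so the endpoint theorem and Theorem~\ref{thm:one-relator} come from one mechanism. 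The citation only buys brevity.

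Three small things to tighten:
\begin{itemize}
\item State $n\geq3$; this is what keeps $E_{2,n+1}$ and $E_{1,n}$ off the first superdiagonal.
\item The symmetry behind the $\alpha_n$ case is reflection in the anti-diagonal rather than transposition. In any case, the direct identity $[A,I+cE_{1,n}]=I-cA_{n,n+1}E_{1,n+1}$, together with the same $\binom q2$ count, does the job.
\item The exact expansion is $(XM)^q=X^q\prod_{j=0}^{q-1}X^{-j}MX^{j}$. This gives the coefficient $-\binom q2$ rather than $+\binom q2$, which changes nothing.
\end{itemize}
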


\begin{proposition}
\label{prop:hyperplane}
Suppose $\langle\alpha_1,\ldots,\alpha_n\rangle$ is defined. If, for
some fixed $i\in\{1,2\}$,
\[
\alpha_h(z_i)=0
\qquad\text{for every }h=1,\ldots,n,
\]
then the product contains $0$.
\end{proposition}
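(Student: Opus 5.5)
One route to this is to choose the Dwyer representation so that it factors through a quotient of $G$ in which the commuting relator disappears. Assume without loss of generality that $i=1$, so $\alpha_h(z_1)=0$ for all $h$. By Proposition~\ref{prop:dwyer}(a) fix a Dwyer representation $\bar\rho:G\to\Ubar_{n+1}$. The hypothesis says that $\bar\rho(z_1)$ has zero first superdiagonal, that is, it lies in the second term of the lower central series of $\Ubar_{n+1}$. If we can replace $\bar\rho$ by another Dwyer representation $\bar\rho'$ with the same superdiagonal $(\alpha_1,\ldots,\alpha_n)$ and $\bar\rho'(z_1)=1$, then $\bar\rho'$ factors through
\[
G/\langle\!\langle z_1\rangle\!\rangle ,
\]
in which the relator $[z_1,z_2]$ becomes trivial. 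Only the image of the Demu\v{s}kin relator with $z_1$ deleted survives. That quotient is a one-relator pro-$p$ group of Demu\v{s}kin type with one free generator ($t_1$) split off: it is a free pro-$p$ product of a free factor with a Demu\v{s}kin group of smaller rank, or a power-twisted variant when $z_1=y_1$ and $x_1^q$ survives. Massey vanishing in all lengths is known for both free pro-$p$ groups and Demu\v{s}kin groups. Such groups are therefore expected to satisfy $n$-fold Massey vanishing, and the factorization then gives a lift $G\to\UU_{n+1}$. Proposition~\ref{prop:dwyer}(b) finishes the argument.

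The key steps, in order, are as follows.
\begin{enumerate}[label=(\arabic*)]
\item Normalize the defining system level by level along the superdiagonals $k=2,\ldots,n-1$ of $\Ubar_{n+1}$. Suppose $\bar\rho(z_1)$ is already trivial on superdiagonals $<k$. The entries of $\bar\rho(z_1)$ on superdiagonal $k$ are then additive, i.e.\ cocycle data $G\to\Fp$, and we change the level-$k$ entries of the defining system by $1$-cochains killing them on $z_1$.
\item Maintaining the homomorphism property forces compensating changes on the partner $t_1$. This works because the Demu\v{s}kin relator is linear in $[z_1,t_1]$ at the relevant level, and the cup product on $\{z_1,t_1\}$ is nondegenerate. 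The commuting relator $[z_1,z_2]$ imposes nothing at level $k$, since $z_1$ has vanishing lower levels.
\item Check that the case $z_1=x_1$ with $q\neq0$ causes no trouble. The term $x_1^q$ maps trivially into the relevant levels once $\bar\rho(x_1)$ is unipotent with zero superdiagonal, because $q=p^k$ and $k\geq2$ when $p=2$.
\item Invoke Massey vanishing for the quotient group, and lift.
\end{enumerate}

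The main obstacle is step~(2): showing that the level-by-level modification of $\bar\rho(z_1)$ to the identity can always be compensated inside the Demu\v{s}kin relator without disturbing the superdiagonal or reintroducing nontrivial entries at lower levels. I would first try to do this explicitly. Conjugation by elementary matrices $I+cE_{a,a+k}$ and multiplication of the lift of $t_1$ by such elements change the relator by a commutator term, namely the level-$k$ increment of $\bar\rho(z_1)$ paired against $\alpha(t_1)$.

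If this compensation fails for some $k$, the fallback is a direct lifting argument. Take arbitrary lifts of the generator images to $\UU_{n+1}$ and compute the central defect of $[z_1,z_2]$. Because $Z_1$ has zero superdiagonal, this defect is a sum of products of strictly interior entries. Then kill it by replacing $Z_2$ with $Z_2$ times a suitable element that commutes with $Z_1$ modulo the center. The remaining Demu\v{s}kin defect is then removed by an $E_{1,n+1}$-type adjustment through the partner $t_1$.
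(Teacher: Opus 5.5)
The paper does not prove this statement; it quotes it as Proposition~5.11 of Blumer--Quadrelli. Judged on its own, your argument has a genuine gap at step~(4), and in one case the route provably fails.

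Your step~(3) treats $z_1=x_1$, which is harmless. The dangerous case is $z_i=y_1$, the partner of $x_1$, with $q\neq0$. There the quotient $\bar G=G/\langle\!\langle z_i\rangle\!\rangle=\langle x_1,x_2,\ldots,y_d\mid x_1^q[x_2,y_2]\cdots[x_d,y_d]\rangle$ does \emph{not} have Massey vanishing.

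\emph{Counterexample.} Take $p=q=3$, $z_1=y_1$, $z_2=x_2$ and $(\alpha_1,\alpha_2,\alpha_3)=(x_1^*,\,x_1^*+x_2^*,\,x_1^*)$. Both components of the adjacent cup products vanish, so the product is defined. Since $\alpha_h(y_1)=0$ for all $h$ but $\alpha_2(x_2)=1$, only $i=1$ is available. Any homomorphism $\bar G\to\UU_4(\mathbb F_3)$ with this superdiagonal sends $x_1$ to some $I+N$ with superdiagonal $(1,1,1)$, so $(I+N)^3=I+N^3=I+E_{14}$. On the other hand, $[\rho(x_j),\rho(y_j)]=I$ for all $j\geq2$. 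For $j\geq3$ both matrices lie in the abelian group $\UU_4^{(2)}$. For $j=2$, a direct check shows that a matrix with superdiagonal $(0,1,0)$ commutes with $\UU_4^{(2)}$. Hence the relator maps to $I+E_{14}\neq I$, and nothing factoring through $\bar G$ lifts, even though the product is defined on $\bar G$. So your steps (1)--(2) could succeed and step~(4) would still fail.

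In $G$ itself the product does vanish. The assignment $x_1\mapsto I+E_{12}+E_{23}+E_{34}$, $x_2\mapsto I+E_{23}$, $y_1\mapsto I-E_{24}$, all other generators $\mapsto I$, is a lift. The correction sits on $z_1$ itself, as the $E_{2,n+1}$-term of Lemma~\ref{lem:endpoint-correction}. That is exactly what your normalization $\bar\rho'(z_1)=1$ forbids, so a correct argument in this case must allow $\rho(z_i)\neq1$.

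In the remaining cases ($z_i\neq y_1$ or $q=0$) one has $\bar G\cong\langle t_i\rangle * D'$ with $D'$ Demu\v{s}kin of rank $2d-2$. Here the route can be completed, but not as written.

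\emph{Step~(2) is only heuristic.} Changing $Z_1$ at level $k$ shifts $[Z_1,T_1]$ at level $k+1$ by a term paired with $\alpha(t_1)$. That term can only be offset by level-$k$ changes of generators whose partners carry nonzero superdiagonal, and every higher level must then be re-solved.

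\emph{Step~(2) is also unnecessary.} In the five-term sequence for $N=\langle\!\langle z_i\rangle\!\rangle$, the space $\cHH^1(N,\Fp)^G$ is at most one-dimensional and is hit by the restriction of $z_i^*$. So transgression vanishes, inflation $\cHH^2(\bar G,\Fp)\to\cHH^2(G,\Fp)$ is injective, and $\bar\alpha_h\smile\bar\alpha_{h+1}=0$ on $\bar G$. The \emph{strong} $n$-fold vanishing of $D'$, applied factorwise, then gives a lift $\bar G\to\UU_{n+1}$ directly. This works because homomorphisms out of a free pro-$p$ product are chosen on each factor, and $\cHH^2$ of the free product is the direct sum. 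This is what your ``expected to satisfy'' must be replaced by.

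\emph{Your fallback does not close the gap either.} Killing $\gamma$ by an element commuting with $Z_1$ modulo the center alters the Demu\v{s}kin relator through $[z_2,t_2]$. That is precisely the compatibility problem of Remark~\ref{rem:compatibility}. An ``$E_{1,n+1}$-type adjustment'' is central, so it changes nothing. The working device is the $E_{2,n+1}$-correction of Lemma~\ref{lem:endpoint-correction}, applied to the partner of a generator $s$ with $\alpha_1(s)\neq0$.
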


\begin{remark}
\label{rem:stronger-zero}
Proposition~\ref{prop:hyperplane} is stronger than the statement that
the product vanishes when both $z$-coordinates vanish for every entry.
It says that vanishing of only one fixed commuting coordinate along the
entire sequence is sufficient.
\end{remark}

Blumer and Quadrelli also prove that every defined product of length
three or four in $\Ftwo$ contains $0$ \cite{BlumerQuadrelli}.

\section{Support blocks and their hierarchy}

\begin{definition}[The $z$-profile]
For $\alpha=(\alpha_1,\ldots,\alpha_n)$ define
\[
v_h=v(\alpha_h)
=
(\alpha_h(z_1),\alpha_h(z_2))\in\Fp^2.
\]
The sequence
\[
v(\alpha)=(v_1,\ldots,v_n)
\]
is the \emph{$z$-profile}.
\end{definition}

\begin{definition}[Support block]
A \emph{support block} is a maximal interval
\[
I=[a,b]\cap\mathbb Z\subseteq\{1,\ldots,n\}
\]
such that $v_h\neq0$ for every $h\in I$. We write $b(v)$ for the number
of support blocks.
\end{definition}

\begin{lemma}
\label{lem:adjacent}
If $\langle\alpha_1,\ldots,\alpha_n\rangle$ is defined, then
\[
\det(v_h,v_{h+1})=0
\qquad
(h=1,\ldots,n-1).
\]
\end{lemma}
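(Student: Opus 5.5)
The plan is to combine the necessary condition \eqref{eq:adjcup} with the description of $\cHH^2(G,\Fp)$ in the basis $\{\eta,\zeta\}$. First I will justify \eqref{eq:adjcup} briefly. By Proposition~\ref{prop:dwyer}(a), a defined product comes with a Dwyer representation $\bar\rho:G\to\Ubar_{n+1}$. For each $h$ with $1\le h\le n-1$, consider the $3\times3$ diagonal block on rows and columns $h,h+1,h+2$. If $n\ge3$, the entry $(h,h+2)$ is not the central entry $(1,n+1)$, so this block gives a homomorphism $G\to\UU_3$ with superdiagonal entries $\alpha_h,\alpha_{h+1}$. Its $(h,h+2)$ entry is then a cochain $c$ with $\delta c=\alpha_h\smile\alpha_{h+1}$ (up to the usual sign convention). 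Hence $\alpha_h\smile\alpha_{h+1}=0$ in $\cHH^2(G,\Fp)$.

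Next I expand this cup product in the basis $\{\eta,\zeta\}$ of $\cHH^2(G,\Fp)$ and read off the $\zeta$-coefficient. By \eqref{eq:zdet}, that coefficient is $\omega_z(\alpha_h,\alpha_{h+1})=\det(v_h,v_{h+1})$. Since the class is zero, every coordinate vanishes, and in particular $\det(v_h,v_{h+1})=0$ for each $h=1,\ldots,n-1$.

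The only delicate point is that \eqref{eq:zdet} really computes the $\zeta$-coordinate, that is, that the Demu\v{s}kin component $\eta$ does not contribute to the $\zeta$-direction. To settle this I would argue via the relator pairing. The classes in $\cHH^2$ are detected by evaluating on the two relators, $r_1=x_1^q\prod[x_i,y_i]$ and $r_2=[z_1,z_2]$. The cup product $\alpha\smile\beta$ evaluated on $r_2$ is $\pm(\alpha(z_1)\beta(z_2)-\alpha(z_2)\beta(z_1))$, by the standard Fox-calculus/Labute formula for commutator relators. So the dual coordinate to $r_2$ is exactly the determinant, and the sign is irrelevant for vanishing. This is the step I would check most carefully, but it is supplied by the cited computation in \cite[\S5.1]{BlumerQuadrelli}, which the paper takes as given.
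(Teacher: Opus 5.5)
Your proposal is correct and is essentially the paper's proof. The paper simply combines \eqref{eq:adjcup} with \eqref{eq:zdet} to conclude that $\det(v_h,v_{h+1})$, the $\zeta$-coefficient of the zero class $\alpha_h\smile\alpha_{h+1}$, vanishes. Your extra justifications are sound and only make explicit what the paper takes as given: the $3\times3$ diagonal block of $\bar\rho$ (legitimate exactly because $n\geq3$ keeps the entry $(h,h+2)$ away from the central entry $(1,n+1)$), and the relator-pairing check that the $\zeta$-coordinate is $\pm\det(v_h,v_{h+1})$ because $\eta$ pairs trivially with $[z_1,z_2]$.
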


\begin{proof}
By \eqref{eq:adjcup},
$\alpha_h\smile\alpha_{h+1}=0$. The coefficient of $\zeta$ in this cup
product is, by \eqref{eq:zdet},
$\det(v_h,v_{h+1})$. Hence it is zero.
\end{proof}

\begin{proposition}
\label{prop:projective}
Let $I$ be a support block of a defined Massey product. Then there is a
unique direction
\[
\ell_I\in\PP^1(\Fp)
\]
such that $v_h\in\ell_I$ for every $h\in I$.
\end{proposition}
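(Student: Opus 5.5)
The plan is a short induction along the block, resting only on Lemma~\ref{lem:adjacent} and the elementary fact that a nonzero vector of $\Fp^2$ spans a unique line. Write $I=\{a,a+1,\ldots,b\}$. Since $v_a\neq0$, set
\[
\ell_I=\Fp v_a\in\PP^1(\Fp).
\]
For existence we show by induction on $h$ that $v_h\in\ell_I$ for $a\le h\le b$. The base case $h=a$ holds by definition.

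For the inductive step, suppose $a\le h<b$ and $v_h\in\ell_I$. Both $v_h$ and $v_{h+1}$ lie in $I$, so both are nonzero. Lemma~\ref{lem:adjacent} gives $\det(v_h,v_{h+1})=0$. Two vectors in $\Fp^2$ with vanishing determinant are linearly dependent. Because $v_h\neq0$, this means $v_{h+1}=\lambda v_h$ for some $\lambda\in\Fp$, and $\lambda\neq0$ since $v_{h+1}\neq0$. Hence
\[
\Fp v_{h+1}=\Fp v_h=\ell_I,
\]
which completes the induction.

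For uniqueness, suppose $\ell\in\PP^1(\Fp)$ is any line containing $v_h$ for all $h\in I$. In particular $\ell$ contains $v_a\neq0$. A one-dimensional subspace containing a nonzero vector is the span of that vector, so $\ell=\Fp v_a=\ell_I$.

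There is no real obstacle here. The only point needing care is that nonvanishing is used at both ends of each adjacent pair. Without it, $\det=0$ would give no information, because a zero vector has vanishing determinant with every vector. This is exactly why directions are attached to maximal nonzero blocks, and why different blocks separated by zeros may carry different directions.
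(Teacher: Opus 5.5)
Your proof is correct and follows the paper's argument: Lemma~\ref{lem:adjacent}, together with the nonvanishing of both vectors in each adjacent pair inside the block, forces consecutive vectors to span the same line, and induction along the interval finishes. Your explicit uniqueness step (any line containing the nonzero vector $v_a$ must equal $\Fp v_a$) spells out a point the paper leaves implicit.
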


\begin{proof}
If $h,h+1\in I$, both vectors are nonzero and
Lemma~\ref{lem:adjacent} gives
$\det(v_h,v_{h+1})=0$. Thus they span the same one-dimensional
subspace. Induction along the interval proves the claim.
\end{proof}

\begin{definition}[Block word]
If the support blocks are $I_1<\cdots<I_r$ with directions
$\ell_1,\ldots,\ell_r$, the sequence
\[
\bigl((I_1,\ell_1),\ldots,(I_r,\ell_r)\bigr)
\]
is the \emph{block word} of the profile.
\end{definition}

\begin{theorem}
\label{thm:block-reduction}
Let $G\in\Ftwo$ and suppose
$\langle\alpha_1,\ldots,\alpha_n\rangle$ is defined. If the product is
not covered by Propositions~\ref{prop:endpoint} and
\ref{prop:hyperplane}, then:
\begin{enumerate}[label=\textup{(\roman*)}]
\item $v_1=v_n=0$;
\item $b(v)\geq1$;
\item every support block is contained in $\{2,\ldots,n-1\}$ and has a
single projective direction;
\item for each $i\in\{1,2\}$ there is an interior index
$h\in\{2,\ldots,n-1\}$ such that $\alpha_h(z_i)\neq0$.
\end{enumerate}
\end{theorem}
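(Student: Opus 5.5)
The plan is to read each conclusion of Theorem~\ref{thm:block-reduction} off the contrapositive of Propositions~\ref{prop:endpoint} and \ref{prop:hyperplane}, with Proposition~\ref{prop:projective} supplying the projective direction. Throughout, ``not covered'' is taken to mean that neither hypothesis holds. For each proposition, the negation of its hypothesis gives one constraint on the $z$-profile.

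\emph{Step (i).} Proposition~\ref{prop:endpoint} applies as soon as some $\alpha_1(z_i)\neq0$ or some $\alpha_n(z_i)\neq0$. Its negation says $\alpha_1(z_1)=\alpha_1(z_2)=0$ and $\alpha_n(z_1)=\alpha_n(z_2)=0$, that is, $v_1=v_n=0$.

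\emph{Step (iv).} Proposition~\ref{prop:hyperplane} applies as soon as, for some fixed $i$, $\alpha_h(z_i)=0$ for all $h$. Its negation says that for each $i\in\{1,2\}$ there is some $h\in\{1,\ldots,n\}$ with $\alpha_h(z_i)\neq0$. By (i) we have $h\neq1,n$, so $h\in\{2,\ldots,n-1\}$.

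\emph{Step (ii).} By (iv), some $v_h$ is nonzero. The singleton $\{h\}$ therefore lies in some maximal interval of nonzero entries, which is a support block, and so $b(v)\geq1$.

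\emph{Step (iii).} A support block consists of indices $h$ with $v_h\neq0$. By (i), neither $1$ nor $n$ can belong to one, so every block lies in $\{2,\ldots,n-1\}$. The existence and uniqueness of a single direction $\ell_I$ is exactly Proposition~\ref{prop:projective}, which rests on Lemma~\ref{lem:adjacent}. Uniqueness holds because a nonzero vector lies on exactly one line through the origin.

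There is no real obstacle here. The one point needing care is the ordering of the steps: (iv) must use (i) to exclude the endpoints, and (ii) is most cleanly derived from (iv).
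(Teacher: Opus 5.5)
Your proposal is correct and follows the paper's proof essentially verbatim. Each part is read off as the negation of the hypothesis of Proposition~\ref{prop:endpoint} or Proposition~\ref{prop:hyperplane}, with Proposition~\ref{prop:projective} supplying the direction. The only cosmetic difference is that you derive (ii) from (iv), whereas the paper gets (ii) directly: if $b(v)=0$, both coordinates vanish everywhere, so Proposition~\ref{prop:hyperplane} applies.
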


\begin{proof}
Part (i) is the negation of the hypothesis of
Proposition~\ref{prop:endpoint}. If $b(v)=0$, then both commuting
coordinates vanish everywhere, so Proposition~\ref{prop:hyperplane}
applies; hence (ii). Part (iii) follows from (i) and
Proposition~\ref{prop:projective}. Finally, if for one fixed
$i\in\{1,2\}$ every $\alpha_h(z_i)$ vanished, then
Proposition~\ref{prop:hyperplane} would apply. The endpoint coordinates
already vanish, so the required nonzero coordinate occurs in the
interior.
\end{proof}

The block count itself has a simple closed form.

\begin{theorem}[Block-count hierarchy]
\label{thm:block-count}
After endpoint reduction, let a zero/nonzero profile on the
$n-2$ interior positions have exactly $r\geq1$ support blocks. Then
\[
r\leq\left\lfloor\frac{n-1}{2}\right\rfloor.
\]
Moreover, the number of interior zero/nonzero patterns with exactly
$r$ support blocks is
\begin{equation}
N(n,r)=\binom{n-1}{2r}.
\label{eq:block-count}
\end{equation}
Consequently, $r$ support blocks first occur for $n=2r+1$, and at that
length the unique pattern is
\[
(0,\bullet,0,\bullet,\ldots,0,\bullet,0).
\]
\end{theorem}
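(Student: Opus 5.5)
The proof is purely combinatorial, so I will ignore the cohomology. After endpoint reduction, Theorem~\ref{thm:block-reduction}(i) gives $v_1=v_n=0$. A zero/nonzero pattern is then a binary word of length $n$ of the form $0\,w\,0$, where $w\in\{0,\bullet\}^{n-2}$ records the interior positions $2,\ldots,n-1$. The support blocks are exactly the maximal runs of $\bullet$ in $w$. So I need to count binary words of length $m=n-2$ with exactly $r$ maximal runs of $\bullet$, and check that this number equals $\binom{m+1}{2r}=\binom{n-1}{2r}$.

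The cleanest route is a bijection with boundary positions. Look at the $n-1$ \emph{gaps} between consecutive positions $h,h+1$ for $h=1,\ldots,n-1$. A block $[a,b]$ with $2\le a\le b\le n-1$ has a left boundary at gap $a-1$ (between $a-1$ and $a$) and a right boundary at gap $b$ (between $b$ and $b+1$). Since the endpoints are $0$, every block has both boundaries inside $\{1,\ldots,n-1\}$. The left boundary of a block precedes its right boundary, since $a-1<b$. Distinct blocks are separated by at least one zero, so the right boundary $b$ of one block is strictly less than the left boundary $a'-1$ of the next, because $a'\ge b+2$.

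So a pattern with $r$ blocks determines a strictly increasing sequence of $2r$ gaps $g_1<g_2<\cdots<g_{2r}$ in $\{1,\ldots,n-1\}$. Conversely, any $2r$-element subset of $\{1,\ldots,n-1\}$, listed in increasing order and paired as $(g_1,g_2),(g_3,g_4),\ldots$, determines the blocks $[g_{2j-1}+1,\,g_{2j}]$. These blocks are nonempty, lie in $\{2,\ldots,n-1\}$, and are separated by at least one zero. I will verify that the two maps are mutually inverse; this is routine, since the gaps where the pattern changes value are exactly the block boundaries. This gives $N(n,r)=\binom{n-1}{2r}$.

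The remaining claims follow directly. We have $N(n,r)>0$ if and only if $2r\le n-1$, which gives $r\le\lfloor (n-1)/2\rfloor$. For fixed $r$, the smallest such $n$ is $n=2r+1$. At that length $N=\binom{2r}{2r}=1$ and every gap is selected, so the blocks are the singletons $\{2\},\{4\},\ldots,\{2r\}$. This is the alternating pattern $(0,\bullet,0,\ldots,\bullet,0)$. There is no real obstacle. The only step needing care is the strict separation $g_{2j}<g_{2j+1}$, which is what encodes the maximality of the blocks and rules out merging adjacent blocks.
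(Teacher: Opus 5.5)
Your proof is correct and takes the same approach as the paper: a pattern with $r$ blocks corresponds to a choice of its $2r$ start/end boundaries among the $n-1$ gaps, which gives $\binom{n-1}{2r}$, the bound $2r\le n-1$, and the unique alternating pattern at $n=2r+1$. Your explicit inverse map $[g_{2j-1}+1,\,g_{2j}]$ and the check $g_{2j}<g_{2j+1}$ simply spell out the bijection that the paper's one-line argument leaves implicit.
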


\begin{proof}
Write $m=n-2$ for the number of interior positions. A binary word of
length $m$ with exactly $r$ runs of $1$'s is determined by choosing the
$2r$ transition boundaries that start and end those runs among the
$m+1=n-1$ gaps around the word. Hence the number is
$\binom{m+1}{2r}=\binom{n-1}{2r}$. Such a word exists precisely when
$2r\leq m+1=n-1$, proving the bound. Equality at the first possible
length $n=2r+1$ leaves only the alternating word.
\end{proof}

\begin{corollary}
For $n\leq4$, every endpoint-reduced nonzero profile has at most one
support block. For $n=5$, the unique two-block zero/nonzero pattern is
\[
(0,v_2,0,v_4,0),
\qquad
v_2,v_4\neq0.
\]
The directions of $v_2$ and $v_4$ are not compared by an adjacent
cup-product equation and may therefore be distinct.
\end{corollary}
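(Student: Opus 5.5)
The corollary is a direct specialization of Theorem~\ref{thm:block-count} together with the endpoint reduction of Theorem~\ref{thm:block-reduction}(i). I will argue each of its three claims separately.

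\emph{At most one block for $n\leq 4$.} After endpoint reduction there are $n-2$ interior positions. Theorem~\ref{thm:block-count} bounds the number of blocks by $r\leq\lfloor (n-1)/2\rfloor$. For $n=3$ and $n=4$ this gives $r\leq 1$. I will also check the degenerate case directly: for $n=3$ there is a single interior position, so the claim is immediate. Alternatively, two blocks need at least one zero separating them, hence at least three interior positions, which forces $n\geq 5$.

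\emph{The unique two-block pattern for $n=5$.} Here $n=2r+1$ with $r=2$, so Theorem~\ref{thm:block-count} says the count is $N(5,2)=\binom{4}{4}=1$ and the pattern is alternating. Concretely, the interior word on positions $2,3,4$ must be $(\bullet,0,\bullet)$. Adjoining the endpoint zeros $v_1=v_5=0$ from Theorem~\ref{thm:block-reduction}(i) gives the profile $(0,v_2,0,v_4,0)$ with $v_2,v_4\neq0$. I will verify the uniqueness by hand as well: a binary word of length $3$ with two runs of ones must have a zero between the runs, and the only such word is $101$.

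\emph{Independence of the two directions.} The only constraints that definability imposes on the profile through the $\zeta$-component are the adjacent equations of Lemma~\ref{lem:adjacent}, namely $\det(v_h,v_{h+1})=0$. In the pattern above, every adjacent pair contains a zero vector, so each of these determinants vanishes trivially. In particular, no equation of the form $\det(v_2,v_4)=0$ arises, and the directions $\ell_1\ni v_2$ and $\ell_2\ni v_4$ are unconstrained by these conditions. The statement only says the directions are ``not compared'' and ``may'' differ, so this observation suffices and no construction is needed.

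I do not expect any real obstacle. The only care needed is to phrase the last claim as the absence of an adjacent constraint rather than as an existence statement, since exhibiting an actual defined product with distinct directions would require a Dwyer representation and is not asserted.
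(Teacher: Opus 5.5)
Your proposal is correct and follows the paper's route: the corollary is read off from Theorem~\ref{thm:block-count}, namely the bound $r\leq\lfloor (n-1)/2\rfloor$ and the unique alternating word at $n=2r+1$. The direction remark comes from the fact that every adjacent determinant in $(0,v_2,0,v_4,0)$ involves a zero vector. One thing to tone down is your assertion that the adjacent equations are the \emph{only} $\zeta$-constraints definability imposes: this is not established, since higher defining-system conditions are not analyzed. As you note yourself, though, the statement needs only the absence of an adjacent equation linking $v_2$ and $v_4$.
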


\begin{remark}
Theorem~\ref{thm:block-count} separates two different sources of
complexity. The block positions are purely combinatorial, while the
labels $\ell_I\in\PP^1(\Fp)$ are projective. A change of projective
direction can occur only after at least one zero profile entry.
\end{remark}

\section{The one-relator reduction}

The final step in the proof of the three- and four-fold result of
Blumer--Quadrelli uses an elementary correction in the
$E_{2,n+1}$-coordinate \cite[proof of Proposition~5.12]{BlumerQuadrelli}.
We isolate the underlying calculation and use it at arbitrary Massey
length.

For $r\geq1$, write
\[
\UU_{n+1}^{(r)}
=
\{I+(u_{ij})\in\UU_{n+1}:u_{ij}=0\text{ whenever }0<j-i<r\}.
\]
Then
\begin{equation}
[\UU_{n+1}^{(r)},\UU_{n+1}^{(s)}]
\subseteq
\UU_{n+1}^{(r+s)}.
\label{eq:filtration}
\end{equation}
In particular, $\UU_{n+1}^{(n)}=Z(\UU_{n+1})$ and
$\UU_{n+1}^{(n+1)}=\{I\}$.

\begin{lemma}
\label{lem:endpoint-calculus}
Let $n\geq3$ and
\[
K(u)=I_{n+1}+uE_{2,n+1}.
\]
Then:
\begin{enumerate}[label=\textup{(\roman*)}]
\item $K(u)$ has zero first superdiagonal and lies in
$\UU_{n+1}^{(n-1)}$;
\item if $S\in\UU_{n+1}$ has $(1,2)$ entry $a$, then
\[
[S,K(u)]=I_{n+1}+auE_{1,n+1},
\qquad
[K(u),S]=I_{n+1}-auE_{1,n+1};
\]
\item $K(u)$ commutes with every element of $\UU_{n+1}^{(2)}$;
\item if $R\in\UU_{n+1}$ has $(1,2)$ entry zero, then
$[R,K(u)]=I_{n+1}$.
\end{enumerate}
\end{lemma}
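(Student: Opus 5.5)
The plan is a direct matrix computation in $\UU_{n+1}$, using only the multiplication rule $E_{ij}E_{kl}=\delta_{jk}E_{il}$ and the fact that $E_{2,n+1}$ has a single nonzero entry. Put $N=uE_{2,n+1}$, so $K(u)=I+N$. Since $N^2=0$ (because $n+1\neq 2$), we have $K(u)^{-1}=I-N$.

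Part~(i) is immediate. The only off-diagonal entry of $K(u)$ sits at position $(2,n+1)$, where $j-i=n-1$. For $n\geq3$ this gives $n-1\geq2$, so the first superdiagonal is zero. Every entry with $0<j-i<n-1$ vanishes, so $K(u)\in\UU_{n+1}^{(n-1)}$.

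The core step is a general formula for conjugating $N$ by an arbitrary $S\in\UU_{n+1}$. Because row $n+1$ of $N$ is zero, and the last column of $S$ has diagonal entry $1$ and zeros below, we get $NS=N$; hence $N S^{-1}=N$ as well. On the other side, $SN=u\,(S e_2)e_{n+1}^{T}$, and $Se_2=e_2+s_{12}e_1$. Therefore $SN=N+s_{12}uE_{1,n+1}$. Combining these,
\[
SK(u)S^{-1}=I+SNS^{-1}=I+SN=K(u)+a\,u\,E_{1,n+1},
\]
where $a=s_{12}$. Then
\[
[S,K(u)]=SK(u)S^{-1}K(u)^{-1}=(K(u)+auE_{1,n+1})(I-N).
\]
The product $E_{1,n+1}N$ vanishes, since $n+1\neq2$. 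So the expression equals $I+auE_{1,n+1}$. If the paper's commutator convention is instead $S^{-1}K^{-1}SK$, the same computation applied to $S^{-1}$ gives the same answer: the $(1,2)$ entry of $S^{-1}$ is $-a$, and that sign is absorbed by the inversion. I will check which convention is in force, but the result holds either way up to this bookkeeping. Finally, $[K(u),S]=[S,K(u)]^{-1}=I-auE_{1,n+1}$, because $E_{1,n+1}^2=0$. This proves~(ii).

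Parts~(iii) and~(iv) follow from~(ii) by specializing to $a=0$. An element of $\UU_{n+1}^{(2)}$ has zero first superdiagonal, so its $(1,2)$ entry is zero. Then $[S,K(u)]=I$, which means $S$ and $K(u)$ commute; this gives~(iii). Part~(iv) is literally the case $a=0$ of~(ii).

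There is no real obstacle in this argument. The only points needing care are the commutator convention and the verification that $NS=N$, which uses that $S$ is upper unitriangular.
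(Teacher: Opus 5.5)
Your proof is correct. Its core is the same direct matrix computation as the paper's, organized differently. The paper reads off only the $(1,n+1)$ entry of $[S,K(u)]$ via the path $1\to2\to n+1$. It leaves centrality of the commutator implicit; this follows from $[\UU_{n+1}^{(1)},\UU_{n+1}^{(n-1)}]\subseteq\UU_{n+1}^{(n)}=Z(\UU_{n+1})$. It then proves (iii) separately from the filtration inclusion $[\UU_{n+1}^{(n-1)},\UU_{n+1}^{(2)}]\subseteq\UU_{n+1}^{(n+1)}=\{I\}$.

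You instead derive the full identity $SK(u)S^{-1}=K(u)+auE_{1,n+1}$ from the rank-one structure of $N$. This proves centrality explicitly. It also makes (iii) a special case of (iv): $K(u)$ commutes with every $S$ whose $(1,2)$ entry vanishes, a set strictly larger than $\UU_{n+1}^{(2)}$. The filtration argument gives something different in return: every element of $\UU_{n+1}^{(n-1)}$, not just $K(u)$, commutes with $\UU_{n+1}^{(2)}$.

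Two small corrections to your write-up:
\begin{enumerate}
\item The reason $NS=N$ is that the last row of $S$ equals $e_{n+1}^{T}$. Row $n+1$ of $N$ and the last column of $S$ are not the relevant data.
\item The paper's convention is $[A,B]=A^{-1}B^{-1}AB$. Your check via $S^{-1}$ and $K(u)^{-1}=K(-u)$ is right. It is quicker to note that $S^{-1}K^{-1}SK=(KS)^{-1}(SKS^{-1}K^{-1})(KS)$, so the two conventions agree whenever the commutator is central.
\end{enumerate}
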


\begin{proof}
Part~(i) is immediate. For part~(ii), only the path
$1\to2\to n+1$ contributes to the commutator, giving
\[
[S,K(u)]_{1,n+1}=S_{1,2}u=au.
\]
Equivalently, one may multiply the two elementary factors directly.
The second identity is its inverse. Part~(iii) follows from
\eqref{eq:filtration}, since
\[
[\UU_{n+1}^{(n-1)},\UU_{n+1}^{(2)}]
\subseteq\UU_{n+1}^{(n+1)}=\{I\}.
\]
Part~(iv) is part~(ii) with $a=0$.
\end{proof}

\begin{lemma}
\label{lem:power-preservation}
Let $A\in\UU_{n+1}(\Fp)$ and let $K(u)$ be as above. Let $q=p^k$ be a
finite parameter allowed in the definition of $\Ftwo$: $k\geq1$ if
$p$ is odd and $k\geq2$ if $p=2$. Then
\[
(AK(u))^q=A^q.
\]
If $q=0$, the power term is absent.
\end{lemma}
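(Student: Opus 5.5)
The plan is to use the fact that $A$ and $K(u)$ generate a subgroup of nilpotency class at most two, and then apply the Hall--Petresco formula in its class-two form. Write $a=A_{1,2}$. By Lemma~\ref{lem:endpoint-calculus}(ii) the commutator $c=[A,K(u)]=I_{n+1}+auE_{1,n+1}$ lies in $Z(\UU_{n+1})$. So $c$ commutes with both $A$ and $K(u)$, and the subgroup $\langle A,K(u)\rangle$ has central derived subgroup. For such a pair the standard identity
\[
(xy)^m=x^m\,y^m\,[y,x]^{\binom{m}{2}}
\]
holds for every $m\geq1$. I will verify it by a short induction on $m$, using $yx=xy[y,x]$ and centrality of $[y,x]$ to move each new copy of $x$ to the left. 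With the sign convention $[x,y]=x^{-1}y^{-1}xy$ we have $[K(u),A]=c^{-1}=I_{n+1}-auE_{1,n+1}$, again by Lemma~\ref{lem:endpoint-calculus}(ii).

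Applying this with $x=A$, $y=K(u)$ and $m=q$ gives
\[
(AK(u))^q=A^q\,K(u)^q\,\bigl(I_{n+1}-auE_{1,n+1}\bigr)^{\binom{q}{2}}.
\]
The two extra factors are computed directly. First, $E_{2,n+1}^2=0$ because $n+1\neq2$, so $K(u)^q=I_{n+1}+quE_{2,n+1}$. Second, $E_{1,n+1}^2=0$, so the last factor equals $I_{n+1}-\binom{q}{2}auE_{1,n+1}$. Since we work over $\Fp$ and $p\mid q$ (here $k\geq1$), we get $K(u)^q=I_{n+1}$.

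The only delicate point is the binomial coefficient $\binom{q}{2}=q(q-1)/2$, and this is exactly where the restriction on parameters enters. If $p$ is odd, then $2$ is invertible modulo $p$ and $p\mid q$, so $p\mid\binom{q}{2}$. If $p=2$, then $\binom{q}{2}=2^{k-1}(2^k-1)$, which is even precisely when $k\geq2$. This is the hypothesis built into $\Ftwo$; for $q=2$ the argument genuinely fails, since $(AK)^2=A^2c^{-1}$ and $c\neq I$ when $au\neq0$. In every allowed case the central factor is therefore trivial, and $(AK(u))^q=A^q$.

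Finally, when $q=0$ (the case $k=\infty$) the relator contains no power term, so there is nothing to prove. I expect the main care to be needed in getting the sign and orientation of the commutator in the class-two identity right. The sign does not affect the conclusion, however, because only the divisibility of $\binom{q}{2}$ by $p$ matters.
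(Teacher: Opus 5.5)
Your proposal is correct and follows essentially the same route as the paper. Both arguments use the class-two power formula $(AK(u))^q=A^qK(u)^q[A,K(u)]^{-\binom{q}{2}}$ (your factor $[K(u),A]^{\binom{q}{2}}$ is the same element), kill $K(u)^q$ via $p\mid q$, and kill the central factor via $p\mid\binom{q}{2}$, which for $p=2$ is exactly where $k\geq2$ is needed. Your explicit induction for the identity and your remark that the argument fails for $q=2$ are useful additions that do not change the argument.
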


\begin{proof}
The matrix $K(u)$ has order dividing $p$. By
Lemma~\ref{lem:endpoint-calculus}(ii), the commutator
$[A,K(u)]$ is central and also has order dividing $p$. With our
commutator convention $[A,B]=A^{-1}B^{-1}AB$, the class-two power
formula gives
\[
(AK(u))^q
=
A^qK(u)^q[A,K(u)]^{-\binom{q}{2}}.
\]
The factor $K(u)^q$ is trivial because $p\mid q$. If $p$ is odd, then
$p\mid\binom{q}{2}$. If $p=2$, the defining condition $k\geq2$ gives
$4\mid q$, hence $2\mid\binom{q}{2}$. The commutator factor is therefore
trivial as well.
\end{proof}

The preceding lemma extracts, in a form independent of $n$, the
power-preservation step used in the low-length argument of
Blumer--Quadrelli.

\begin{lemma}
\label{lem:endpoint-correction}
Let $n\geq3$ and suppose
$\langle\alpha_1,\ldots,\alpha_n\rangle$ is defined but is not covered
by Proposition~\ref{prop:endpoint}. Choose lifts
\[
M_s\in\UU_{n+1},
\qquad s\in X_G,
\]
of a Dwyer representation, preserving the prescribed first
superdiagonal. Assume
\[
[M_{z_1},M_{z_2}]=I_{n+1}
\]
and that the Demu\v{s}kin relator evaluates to
\[
M_{x_1}^q[M_{x_1},M_{y_1}]\cdots[M_{x_d},M_{y_d}]
=
I_{n+1}+\delta E_{1,n+1}.
\]
Then the lifts can be modified, without changing their first
superdiagonal and without destroying the exact commuting relation, so
that the Demu\v{s}kin relator also evaluates to the identity.
\end{lemma}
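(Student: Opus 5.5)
We multiply a single generator matrix $M_t$ by $K(u)=I_{n+1}+uE_{2,n+1}$ from Lemma~\ref{lem:endpoint-calculus}. The generator $t$ is chosen so that its $(1,2)$ entry $a=\alpha_1(t)$ is nonzero. By part~(i) of that lemma, this does not change the first superdiagonal, so we still have lifts of the same Dwyer representation.

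\emph{Step 1: choice of $t$.} We may assume $\alpha_1\neq0$; otherwise the product contains $0$ by \cite[Lemma~2.5(a)]{BlumerQuadrelli}, and there is nothing to prove. Since the product is not covered by Proposition~\ref{prop:endpoint}, we have $\alpha_1(z_1)=\alpha_1(z_2)=0$. Hence there is a generator $t\in X_G\setminus\{z_1,z_2\}$ with $\alpha_1(t)=a\neq0$.

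\emph{Step 2: the commuting relation survives.} Since $t\neq z_1,z_2$, the matrices $M_{z_1},M_{z_2}$ are untouched. The relation $[M_{z_1},M_{z_2}]=I_{n+1}$ therefore persists automatically.

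\emph{Step 3: effect on the Demu\v{s}kin relator.} Replace $M_t$ by $M_tK(u)$. We track how each factor of the relator changes, using that $K(u)$ commutes with $\UU_{n+1}^{(2)}$ and that its commutators with arbitrary elements are central.
\begin{itemize}
\item If $t=x_1$, Lemma~\ref{lem:power-preservation} gives $(M_{x_1}K(u))^q=M_{x_1}^q$, so the power term is unchanged. If $q=0$ the power term is absent.
\item For the single commutator $[M_{x_j},M_{y_j}]$ containing $t$, let $s$ denote the partner of $t$ in that pair. The modulo-center bilinearity of commutators when one argument is perturbed by an element of $\UU^{(n-1)}$ gives, for $t=x_j$,
\[
[M_tK(u),M_s]=[M_t,M_s]\,[K(u),M_s].
\]
Here one uses that $[M_t,M_s]\in\UU^{(2)}$ commutes with $K(u)$, and that $[K(u),M_s]$ is central. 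Concretely, $[AK,B]=K^{-1}[A,B]K\,[K,B]$, and conjugation by $K$ fixes $[A,B]\in\UU^{(2)}$ by Lemma~\ref{lem:endpoint-calculus}(iii).
\item By Lemma~\ref{lem:endpoint-calculus}(ii), the central factor equals $I-\alpha_1(s)uE_{1,n+1}$. The sign flips if $t=y_j$.
\end{itemize}
Central factors can be pulled out of the product. The relator thus becomes $I+(\delta\mp\alpha_1(s)u)E_{1,n+1}$.

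\emph{Step 4: solving for $u$.} The real requirement is a generator $t\notin\{z_1,z_2\}$ whose partner $s$ has $\alpha_1(s)\neq0$; Step 3 shows the relevant coefficient is $\alpha_1(s)$, not $\alpha_1(t)$. This is where care is needed. Let $w$ be any generator with $\alpha_1(w)\neq0$; we know $w\notin\{z_1,z_2\}$. Take $t$ to be the partner of $w$.
\begin{itemize}
\item If $t\notin\{z_1,z_2\}$, we are done.
\item If $t=z_i$, the $z_i$ matrix is modified, and we must check $[M_{z_1}K(u),M_{z_2}]=I$. By the computation in Step 3 this equals $[K(u),M_{z_2}]=I-\alpha_1(z_2)uE_{1,n+1}=I$, because $\alpha_1(z_2)=0$. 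The commuting relation is again preserved; Lemma~\ref{lem:endpoint-calculus}(iv) gives exactly this.
\end{itemize}
In either case we choose $u=\pm\delta/\alpha_1(w)$, which kills the defect.

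\emph{Main obstacle.} The delicate point is the bookkeeping in Step 3. One must verify that modifying $M_t$ changes only the one commutator containing $t$ (and the power term, if $t=x_1$) by a central factor. One must also track the power term when $t=x_1$ together with its commutator $[x_1,y_1]$; the power term is handled by Lemma~\ref{lem:power-preservation}. The filtration \eqref{eq:filtration} and Lemma~\ref{lem:endpoint-calculus}(iii) are what make this bookkeeping clean.
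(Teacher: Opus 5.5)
Your final argument in Step~4 is correct and is essentially the paper's proof: you pick $w$ with $\alpha_1(w)\neq0$ (so $w\notin\{z_1,z_2\}$) and multiply the partner's matrix by $K(u)$, which changes only the one affected commutator, by the central factor $I\pm\alpha_1(w)uE_{1,n+1}$. You then invoke Lemma~\ref{lem:power-preservation} when the partner is $x_1$ and Lemma~\ref{lem:endpoint-calculus}(iv) when the partner is $z_1$ or $z_2$, exactly as the paper does. You should simply delete the false start in Steps~1--2, where you modify $M_t$ with $\alpha_1(t)\neq0$ itself and claim the commuting relation is automatically untouched, since the choice made in Step~4 supersedes it.
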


\begin{proof}
If $\alpha_1=0$, the product already contains $0$ by
\cite[Lemma~2.5(a)]{BlumerQuadrelli}. Hence assume $\alpha_1\neq0$.
Because Proposition~\ref{prop:endpoint} does not apply,
\[
\alpha_1(z_1)=\alpha_1(z_2)=0.
\]
Choose $s\in X_G$ with
\[
a:=\alpha_1(s)\neq0.
\]
Then $s\notin\{z_1,z_2\}$. Let $t$ be the Demu\v{s}kin partner of $s$
and replace
\[
M_t\longmapsto M_tK(u),
\qquad
K(u)=I_{n+1}+uE_{2,n+1}.
\]
By Lemma~\ref{lem:endpoint-calculus}(i), this does not alter the
prescribed first superdiagonal.

Consider first the Demu\v{s}kin commutator containing the pair
$\{s,t\}$. With the identities
\[
[X,YZ]=[X,Z][X,Y]^Z,
\qquad
[XY,Z]=[X,Z]^Y[Y,Z],
\]
where $W^Z=Z^{-1}WZ$,
and Lemma~\ref{lem:endpoint-calculus}(iii), replacing the partner
matrix by $M_tK(u)$ multiplies that commutator by
\[
[M_s,K(u)]
\quad\text{or}\quad
[K(u),M_s],
\]
according to the order of $s,t$ in the defining relator. By
Lemma~\ref{lem:endpoint-calculus}(ii), this factor is
\[
I_{n+1}\pm auE_{1,n+1}.
\]
It is central, so no noncentral coordinate of the Demu\v{s}kin relator
is changed.

If $t=z_1$, then
\[
[M_{z_1}K(u),M_{z_2}]
=
[M_{z_1},M_{z_2}]^{K(u)}[K(u),M_{z_2}]
=
I_{n+1},
\]
because $\alpha_1(z_2)=0$ and
Lemma~\ref{lem:endpoint-calculus}(iv) applies. The case $t=z_2$ is
analogous. Thus the exact commuting relator is preserved whenever the
corrected partner is one of the commuting generators.

Finally, if $t=x_1$, then the correction also changes the matrix in the
power term. Lemma~\ref{lem:power-preservation} gives
\[
(M_{x_1}K(u))^q=M_{x_1}^q,
\]
so the power term contributes no additional change. In every case the
only change in the Demu\v{s}kin relator is the central scalar
$\pm au$. Since $a\neq0$, choose $u$ so that this scalar is $-\delta$.
The corrected Demu\v{s}kin relator is then exact.
\end{proof}

\begin{theorem}[One-relator reduction]
\label{thm:one-relator}
Let $G\in\Ftwo$, let $n\geq3$, and suppose
$\langle\alpha_1,\ldots,\alpha_n\rangle$ is defined. If it is not already
covered by endpoint vanishing and some Dwyer representation admits lifts
for which
\[
[M_{z_1},M_{z_2}]=I_{n+1},
\]
then
\[
0\in\langle\alpha_1,\ldots,\alpha_n\rangle.
\]
\end{theorem}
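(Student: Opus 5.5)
The plan is to combine Dwyer's criterion (Proposition~\ref{prop:dwyer}) with Lemma~\ref{lem:endpoint-correction}. Because the group $G$ is given by generators $X_G$ and two relators, a continuous homomorphism $G\to\UU_{n+1}$ is the same thing as an assignment $s\mapsto M_s\in\UU_{n+1}$ under which both relators evaluate to $I_{n+1}$. Since $\UU_{n+1}$ is a finite $p$-group, continuity is automatic for homomorphisms out of the free pro-$p$ group on $X_G$. So it suffices to produce lifts $M_s$ of the Dwyer representation $\bar\rho$ that keep the prescribed first superdiagonal $(\alpha_h)$ and make both relators exact.

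First, start from the lifts supplied by the hypothesis, for which $[M_{z_1},M_{z_2}]=I_{n+1}$. Since $\bar\rho$ is a homomorphism to $\Ubar_{n+1}$, the Demu\v{s}kin relator maps to the identity in $\Ubar_{n+1}$. Its value on the lifts therefore lies in $Z(\UU_{n+1})$, so it has the form $I_{n+1}+\delta E_{1,n+1}$ for some $\delta\in\Fp$. The lifts agree with $\bar\rho$ modulo the center, and the center lies in $\UU_{n+1}^{(n)}$ with $n\ge 3$, so the first superdiagonal of each $M_s$ is still $\alpha_h(s)$. This puts us exactly in the setting of Lemma~\ref{lem:endpoint-correction}.

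Next, observe that the product is not covered by Proposition~\ref{prop:endpoint}; otherwise we would be done immediately. Applying Lemma~\ref{lem:endpoint-correction} then modifies one partner matrix by $K(u)$. This keeps the first superdiagonal and the exact commuting relation, and it makes the Demu\v{s}kin relator exact. The lemma also covers the degenerate case $\alpha_1=0$, via \cite[Lemma~2.5(a)]{BlumerQuadrelli}, and the case where the power term is affected, via Lemma~\ref{lem:power-preservation}. In the case $q=0$ there is no power term at all. The resulting assignment respects both defining relators, so it defines $\rho:G\to\UU_{n+1}$ whose $(h,h+1)$ entries are $\alpha_h$. Proposition~\ref{prop:dwyer}(b) then gives $0\in\langle\alpha_1,\ldots,\alpha_n\rangle$.

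The only point needing care is the passage from relator values to a homomorphism of pro-$p$ groups, together with the check that the Demu\v{s}kin defect is genuinely central. Both follow from the presentation \eqref{eq:presentation} and from $\bar\rho$ being a homomorphism. The substantive work has already been done in Lemma~\ref{lem:endpoint-correction}, so no step here should be a real obstacle.
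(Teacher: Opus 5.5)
Your proposal is correct and follows the paper's proof essentially verbatim. The argument has three steps: the relator values of the lifts are central because $\Ubar_{n+1}$ is the target of the homomorphism $\bar\rho$; Lemma~\ref{lem:endpoint-correction} kills the Demu\v{s}kin defect while preserving exact commutation and the first superdiagonal; and Proposition~\ref{prop:dwyer}(b) then gives $0\in\langle\alpha_1,\ldots,\alpha_n\rangle$. Your extra remarks are accurate refinements of the same argument: continuity is automatic, and in the degenerate case $\alpha_1=0$ the conclusion comes directly from Blumer--Quadrelli's Lemma~2.5(a) rather than from a modified lift.
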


\begin{proof}
Because the chosen matrices lift a homomorphism to $\Ubar_{n+1}$, each
defining relator evaluates in $Z(\UU_{n+1})$. The commuting relator is
exact by hypothesis, and Lemma~\ref{lem:endpoint-correction} removes the
remaining Demu\v{s}kin central defect while preserving both the first
superdiagonal and exact commutation. The corrected matrices therefore
define a continuous homomorphism
\[
G\longrightarrow\UU_{n+1}
\]
with the prescribed first superdiagonal. Proposition~\ref{prop:dwyer}(b)
gives the result.
\end{proof}

\section{The normalized \texorpdfstring{$\UU_6$}{U6} calculation}

We now specialize to length five. The purpose of this section is twofold:
to supply the exact commutator identities needed later, and to correct
the deductions that can be made from them.

Let $C_1,C_2\in\UU_6$. Assume the first superdiagonal of $C_1$ is
\[
(0,\lambda_2,\lambda_3,\lambda_4,0)
\]
and the first superdiagonal of $C_2$ is zero. For $j-i\geq2$ write
\[
(C_1)_{ij}=a_{ij},
\qquad
(C_2)_{ij}=b_{ij},
\]
and use the convention
\[
[A,B]=A^{-1}B^{-1}AB.
\]

\begin{proposition}
\label{prop:u6}
Suppose $[C_1,C_2]$ is central, and write
\[
[C_1,C_2]=I_6+\gamma E_{1,6}.
\]
Then
\begin{align}
b_{13}\lambda_3&=0,
\label{eq:u61}\\
a_{13}b_{35}-a_{35}b_{13}-b_{14}\lambda_4&=0,
\label{eq:u62}\\
-b_{24}\lambda_4+b_{35}\lambda_2&=0,
\label{eq:u63}\\
a_{24}b_{46}-a_{46}b_{24}+b_{36}\lambda_2&=0,
\label{eq:u64}\\
b_{46}\lambda_3&=0,
\label{eq:u65}
\end{align}
and
\begin{equation}
\gamma
=
a_{13}b_{36}
+a_{14}b_{46}
-a_{36}b_{13}
-a_{46}b_{14}.
\label{eq:gamma}
\end{equation}
\end{proposition}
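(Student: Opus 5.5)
The plan is to linearize the group commutator. Write $C_1=I_6+N_1$ and $C_2=I_6+N_2$ with $N_1,N_2$ strictly upper triangular. The convention $[A,B]=A^{-1}B^{-1}AB$ gives $C_1C_2=C_2C_1[C_1,C_2]$. So centrality of the commutator, written as $I_6+\gamma E_{1,6}$, is equivalent to
\[
C_1C_2-C_2C_1=\gamma\,C_2C_1E_{1,6}.
\]
Since $C_2C_1$ is unitriangular, its first column is $e_1$, so $C_2C_1E_{1,6}=E_{1,6}$. The identity terms cancel on the left, and the condition becomes the additive statement
\[
N_1N_2-N_2N_1=\gamma E_{1,6}.
\]
Thus every entry of $N_1N_2-N_2N_1$ must vanish except the $(1,6)$ entry, which must equal $\gamma$. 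Conversely, this also identifies $\gamma$, which gives \eqref{eq:gamma}.

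Next I will expand $(N_1N_2-N_2N_1)_{ij}=\sum_{i<k<j}\bigl((N_1)_{ik}(N_2)_{kj}-(N_2)_{ik}(N_1)_{kj}\bigr)$ diagonal by diagonal. I will use throughout that $(N_2)_{k,k+1}=0$ for all $k$, and that $(N_1)_{12}=(N_1)_{56}=0$ while $(N_1)_{k,k+1}=\lambda_k$ for $k=2,3,4$.

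\begin{enumerate}[label=\textup{(\alph*)}]
\item On the second superdiagonal, every term involves a superdiagonal entry of $N_2$. These entries all vanish, so this diagonal gives no condition.
\item On the third superdiagonal, the $(1,4)$ entry reduces to $-b_{13}\lambda_3$, giving \eqref{eq:u61}. The $(2,5)$ entry is $\lambda_2b_{35}-b_{24}\lambda_4$, giving \eqref{eq:u63}. The $(3,6)$ entry reduces to $\lambda_3b_{46}$, giving \eqref{eq:u65}.
\item On the fourth superdiagonal, the $(1,5)$ entry is $a_{13}b_{35}-b_{13}a_{35}-b_{14}\lambda_4$, giving \eqref{eq:u62}. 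Here the terms through $k=2$ and $k=4$ on the $N_1$ side drop, since $(N_1)_{12}=0$ and $(N_2)_{45}=0$. The $(2,6)$ entry is $\lambda_2b_{36}+a_{24}b_{46}-b_{24}a_{46}$, giving \eqref{eq:u64}. The terms with $k=5$ vanish because $(N_1)_{56}=0$ and $(N_2)_{56}=0$.
\item The $(1,6)$ entry gives \eqref{eq:gamma}. Every path through a superdiagonal entry of $N_1$ at position $(1,2)$ or $(5,6)$ dies, and so does every path through a superdiagonal entry of $N_2$. What remains are the two-step paths $1\to3\to6$ and $1\to4\to6$ in each order, which are exactly the four listed terms.
\end{enumerate}

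The only real obstacle is bookkeeping: getting the signs right under the stated commutator convention, and checking that no surviving path has been dropped. I expect the reduction $C_2C_1E_{1,6}=E_{1,6}$ to be the key simplification, because it removes every higher-order correction from the group commutator. After that, each equation is a single entry of a matrix product of strictly upper triangular $6\times6$ matrices, and I would verify each one by listing the intermediate indices $k$.
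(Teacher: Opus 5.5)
Your proposal is correct and follows essentially the same route as the paper: both rewrite centrality as $C_1C_2-C_2C_1=\gamma E_{1,6}$ and read off the noncentral entries and the $(1,6)$ entry by direct multiplication. You also make explicit why $C_2C_1E_{1,6}=E_{1,6}$, which the paper uses without comment, and your path sums at $(1,4),(2,5),(3,6),(1,5),(2,6),(1,6)$ all check out with the stated signs.
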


\begin{proof}
With the convention
\[
[A,B]=A^{-1}B^{-1}AB,
\]
centrality gives
\[
C_1C_2=C_2C_1(I_6+\gamma E_{1,6})
       =C_2C_1+\gamma E_{1,6}.
\]
Thus every noncentral entry of $C_1C_2-C_2C_1$ is zero, while its
$(1,6)$ entry is exactly $\gamma$. This avoids any ambiguity from
expanding inverses.

Direct multiplication gives the complete list of potentially nonzero
entries of $C_1C_2-C_2C_1$:
\[
\begin{array}{c|l}
\text{position} & (C_1C_2-C_2C_1)_{ij}\\
\hline
(1,4) & -b_{13}\lambda_3\\
(1,5) & a_{13}b_{35}-a_{35}b_{13}-b_{14}\lambda_4\\
(2,5) & -b_{24}\lambda_4+b_{35}\lambda_2\\
(2,6) & a_{24}b_{46}-a_{46}b_{24}+b_{36}\lambda_2\\
(3,6) & b_{46}\lambda_3\\
(1,6) & a_{13}b_{36}+a_{14}b_{46}
        -a_{36}b_{13}-a_{46}b_{14}.
\end{array}
\]
For example, at position $(1,5)$ one has
\[
(C_1C_2)_{1,5}=a_{15}+b_{15}+a_{13}b_{35},
\]
while
\[
(C_2C_1)_{1,5}=a_{15}+b_{15}+b_{13}a_{35}+b_{14}\lambda_4.
\]
Their difference is
\[
a_{13}b_{35}-a_{35}b_{13}-b_{14}\lambda_4,
\]
which gives \eqref{eq:u62} with the stated commutator convention.
At position $(2,6)$,
\[
(C_1C_2)_{2,6}
=
a_{26}+b_{26}+\lambda_2b_{36}+a_{24}b_{46},
\]
whereas
\[
(C_2C_1)_{2,6}
=
a_{26}+b_{26}+b_{24}a_{46},
\]
which yields \eqref{eq:u64}. Finally,
\[
(C_1C_2-C_2C_1)_{1,6}
=
a_{13}b_{36}+a_{14}b_{46}
-a_{36}b_{13}-a_{46}b_{14},
\]
giving \eqref{eq:gamma}. The remaining three noncentral rows follow by
the same two-factor multiplication. Since no other noncentral entry can be
nonzero under the stated first-superdiagonal assumptions, centrality is
equivalent to \eqref{eq:u61}--\eqref{eq:u65}.
\end{proof}

The following consequences are the ones that will be used. Notice that
``the defect is removable in the normalized model'' is weaker than
``the corresponding Massey product vanishes.''

\begin{corollary}
\label{cor:contiguous}
In the setting of Proposition~\ref{prop:u6}:
\begin{enumerate}[label=\textup{(\alph*)}]
\item If the support of $(\lambda_2,\lambda_3,\lambda_4)$ is
$\{2,3,4\}$, then $\gamma=0$ and hence $[C_1,C_2]=I$.
\item If the support is $\{3,4\}$, then the central defect can be removed
inside the normalized model by changing the free coordinate $b_{36}$.
\item If the support is $\{2,3\}$, then the central defect can be removed
inside the normalized model by changing the free coordinate $b_{14}$.
\item If the support is $\{3\}$, then the central defect can be removed
inside the normalized model by changing $b_{14}$ and/or $b_{36}$.
\end{enumerate}
\end{corollary}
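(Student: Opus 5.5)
The plan is to read everything off the explicit equations \eqref{eq:u61}--\eqref{eq:gamma} of Proposition~\ref{prop:u6}. The first step, common to all four parts, uses that $\lambda_3\neq0$ in every listed support. Then \eqref{eq:u61} and \eqref{eq:u65} force $b_{13}=b_{46}=0$, and the defect collapses to
\[
\gamma=a_{13}b_{36}-a_{46}b_{14}.
\]
The remaining constraints become:
\[
a_{13}b_{35}=b_{14}\lambda_4,\qquad b_{24}\lambda_4=b_{35}\lambda_2,\qquad b_{36}\lambda_2=a_{46}b_{24}.
\]
These come from \eqref{eq:u62}, \eqref{eq:u63} and \eqref{eq:u64} respectively.

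For (a), all of $\lambda_2,\lambda_3,\lambda_4$ are invertible. I solve $b_{14}=a_{13}b_{35}/\lambda_4$ and $b_{36}=a_{46}b_{24}/\lambda_2$ and substitute:
\[
\gamma=\frac{a_{13}a_{46}}{\lambda_2\lambda_4}\bigl(b_{24}\lambda_4-b_{35}\lambda_2\bigr).
\]
This vanishes by \eqref{eq:u63}. Hence $[C_1,C_2]=I_6$.

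For (b)--(d), the key observation is about where the two coordinates appear. Among the centrality equations, $b_{36}$ appears only in \eqref{eq:u64}, with coefficient $\lambda_2$. Likewise $b_{14}$ appears only in \eqref{eq:u62}, with coefficient $\lambda_4$. So if $\lambda_2=0$ (resp.\ $\lambda_4=0$), changing $b_{36}$ (resp.\ $b_{14}$) preserves centrality. It also leaves the first superdiagonal of $C_2$ unchanged, since these entries lie above it, and it only alters $\gamma$ through the linear term above.

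In (b) we have $\lambda_2=0$. If $a_{13}\neq0$, I choose $b_{36}$ to make $\gamma=0$. If $a_{13}=0$, then \eqref{eq:u62} gives $b_{14}\lambda_4=0$, so $b_{14}=0$ and already $\gamma=0$.

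Part (c) is symmetric, with $\lambda_4=0$. If $a_{46}\neq0$, I adjust $b_{14}$. Otherwise \eqref{eq:u64} gives $b_{36}\lambda_2=0$, so $b_{36}=0$ and $\gamma=0$.

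In (d) both $b_{14}$ and $b_{36}$ are free. I adjust whichever has a nonzero coefficient among $a_{46}$ and $a_{13}$. If both coefficients vanish, then $\gamma=0$ already.

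The only delicate point is the degenerate subcases, where the natural correcting coordinate has zero coefficient in $\gamma$. These must be handled by showing that the defect is already zero, rather than by a correction. I also read ``removable'' as including the already-exact case.
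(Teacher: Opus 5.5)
Your proposal is correct and follows essentially the same route as the paper. You use $\lambda_3\neq0$ to force $b_{13}=b_{46}=0$ and reduce $\gamma$ to $a_{13}b_{36}-a_{46}b_{14}$. You then solve the linear relations from \eqref{eq:u62}--\eqref{eq:u64} in case (a), and in cases (b)--(d) either adjust $b_{36}$ or $b_{14}$ or observe that the defect already vanishes. Your explicit check that $b_{36}$ and $b_{14}$ enter the centrality equations only with coefficients $\lambda_2$ and $\lambda_4$ justifies the paper's bare claim that these coordinates ``remain free.''
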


\begin{proof}
For (a), since all three $\lambda_i$ are nonzero,
\eqref{eq:u61} and \eqref{eq:u65} give
$b_{13}=b_{46}=0$. Then \eqref{eq:u62} and \eqref{eq:u63} give
\[
b_{14}=\frac{a_{13}b_{35}}{\lambda_4},
\qquad
b_{24}=\frac{b_{35}\lambda_2}{\lambda_4}.
\]
Equation \eqref{eq:u64} gives
\[
b_{36}=\frac{a_{46}b_{24}}{\lambda_2}
=\frac{a_{46}b_{35}}{\lambda_4}.
\]
Substitution into \eqref{eq:gamma} yields
\[
\gamma
=
a_{13}\frac{a_{46}b_{35}}{\lambda_4}
-
a_{46}\frac{a_{13}b_{35}}{\lambda_4}
=0.
\]

For (b), $\lambda_2=0$ and $\lambda_3,\lambda_4\neq0$. The equations
give
\[
b_{13}=b_{24}=b_{46}=0,
\qquad
b_{14}=\frac{a_{13}b_{35}}{\lambda_4},
\]
while $b_{36}$ remains free. Hence
\[
\gamma
=
a_{13}\left(
b_{36}-\frac{a_{46}b_{35}}{\lambda_4}
\right).
\]
If $a_{13}=0$, the defect is already zero; otherwise choose
$b_{36}=\frac{a_{46}b_{35}}{\lambda_4}$.

For (c), $\lambda_4=0$ and $\lambda_2,\lambda_3\neq0$. The equations
give
\[
b_{13}=b_{35}=b_{46}=0,
\qquad
b_{36}=\frac{a_{46}b_{24}}{\lambda_2},
\]
while $b_{14}$ remains free. Thus
\[
\gamma
=
a_{46}\left(
\frac{a_{13}b_{24}}{\lambda_2}-b_{14}
\right),
\]
which can be killed by choosing $b_{14}$ unless $a_{46}=0$, in which
case it is already zero.

For (d), $\lambda_2=\lambda_4=0$ and $\lambda_3\neq0$. Then
$b_{13}=b_{46}=0$ and
\[
\gamma=a_{13}b_{36}-a_{46}b_{14}.
\]
If $(a_{13},a_{46})=(0,0)$ the defect vanishes; otherwise the two free
coordinates can be chosen to make $\gamma=0$.
\end{proof}

\begin{remark}
\label{rem:compatibility}
	For parts \textup{(b)}--\textup{(d)} of Corollary~\ref{cor:contiguous},
	the normalized corrections identify coordinates in the commuting pair that
	can be adjusted to eliminate the central commuting defect. Extending these
	corrections to vanishing of the corresponding Massey products requires
	compatibility with the full Demu\v{s}kin presentation: the induced change in
	the Demu\v{s}kin partner must be controlled so that the defining relator
	remains satisfied modulo the center. In part \textup{(a)}, no such additional
	compatibility step is needed, since centrality already forces $[C_1,C_2]=I$.
\end{remark}

\section{The length-five theorem}

We now combine the support-block geometry, the normalized calculation, and
Theorem~\ref{thm:one-relator}.

\begin{theorem}[Full interior support]
\label{thm:full-support}
Let $G\in\Ftwo$. Let
\[
\langle\alpha_1,\ldots,\alpha_5\rangle
\]
be defined. If
\[
v_2,v_3,v_4\neq0,
\]
then
\[
0\in\langle\alpha_1,\ldots,\alpha_5\rangle.
\]
\end{theorem}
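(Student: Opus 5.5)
We argue by reduction to the earlier results, splitting according to the single projective direction carried by the interior support. First, if the product is covered by Proposition~\ref{prop:endpoint}, we are done. Otherwise $v_1=v_5=0$, and since $v_2,v_3,v_4\neq0$, Proposition~\ref{prop:projective} puts $\{2,3,4\}$ in one support block with a single direction $\ell\in\PP^1(\Fp)$. We may also assume every $\alpha_h\neq0$, by \cite[Lemma~2.5(a)]{BlumerQuadrelli}. If $\ell$ is a coordinate axis, say $\ell=\Fp\cdot(1,0)$, then $\alpha_h(z_2)=0$ for all $h$, and Proposition~\ref{prop:hyperplane} applies; the other axis is symmetric. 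So we reduce to $\ell=\Fp\cdot(1,\kappa)$ with $\kappa\neq0$. Thus $v_h=\mu_h(1,\kappa)$ with $\mu_2,\mu_3,\mu_4\neq0$, and $\mu_1=\mu_5=0$.

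Next, fix a Dwyer representation $\bar\rho$ and lifts $M_s\in\UU_6$ for $s\in X_G$ preserving the first superdiagonal. Put $C_1=M_{z_1}$ and $C_2=M_{z_2}M_{z_1}^{-\kappa}$. Modulo $\UU_6^{(2)}$ the group is abelian on the first superdiagonal. Hence the first superdiagonal of $C_2$ is $v(\cdot)$-coordinatewise $\alpha_h(z_2)-\kappa\alpha_h(z_1)=0$, while that of $C_1$ is $(0,\mu_2,\mu_3,\mu_4,0)$. Because $[M_{z_1},M_{z_2}]$ is central (the lifts induce a homomorphism to $\Ubar_6$), and $C_2$ differs from $M_{z_2}$ by a power of $C_1$, we get $[C_1,C_2]=[M_{z_1},M_{z_2}]$ up to the identity $[C_1,XC_1^{-\kappa}]=[C_1,C_1^{-\kappa}][C_1,X]^{C_1^{-\kappa}}=[C_1,X]^{C_1^{-\kappa}}$. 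A central element is fixed by conjugation, so $[C_1,C_2]=[M_{z_1},M_{z_2}]$, which is central. Proposition~\ref{prop:u6} then applies with $\lambda_h=\mu_h$. Corollary~\ref{cor:contiguous}(a), full support $\{2,3,4\}$, forces $\gamma=0$, i.e.\ $[M_{z_1},M_{z_2}]=I_6$ exactly, with no adjustment of the lifts needed.

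Finally, the commuting relator is exact for these lifts, so Theorem~\ref{thm:one-relator} applies with $n=5$. Its endpoint correction by $K(u)=I_6+uE_{2,6}$ on a Demu\v{s}kin partner removes the remaining central defect and preserves exact commutation and the power term. This gives $0\in\langle\alpha_1,\ldots,\alpha_5\rangle$.

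The step I expect to be the main obstacle is the auxiliary-quotient step. I must verify carefully that $C_2=M_{z_2}M_{z_1}^{-\kappa}$ has identically zero first superdiagonal, which is a statement about the abelianization of $\UU_6$ and so is fine. I must also verify that its commutator with $C_1$ literally equals the original commuting defect rather than differing by a noncentral correction, which holds because the original commutator is already central. A subtler point is that Proposition~\ref{prop:u6} is stated for arbitrary $C_1,C_2$ with the given superdiagonals, so no normalization of the higher entries is needed. Its conclusion (a) is unconditional, which is why full interior support avoids the compatibility issue noted in Remark~\ref{rem:compatibility}.
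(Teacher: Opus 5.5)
Your proposal is correct and follows the paper's proof essentially step for step. After the endpoint and coordinate-axis reductions, your auxiliary matrix $M_{z_2}M_{z_1}^{-\kappa}$ is exactly the paper's $D=C_2C_1^{-\kappa}$; centrality of its commutator with $C_1$ lets Corollary~\ref{cor:contiguous}(a) force exact commutation, and Theorem~\ref{thm:one-relator} then removes the Demu\v{s}kin defect (your explicit identity $[C_1,XC_1^{-\kappa}]=[C_1,X]^{C_1^{-\kappa}}$ just spells out a step the paper states in one line).
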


\begin{proof}
If the endpoint theorem applies, there is nothing to prove, so assume
\[
v_1=v_5=0.
\]
By Proposition~\ref{prop:projective}, the three nonzero vectors
$v_2,v_3,v_4$ lie on a common projective line
$\ell\in\PP^1(\Fp)$.

If $\ell$ is one of the coordinate axes, then one fixed coordinate
$\alpha_h(z_i)$ vanishes for every $h=1,\ldots,5$, because the endpoint
vectors are also zero. Proposition~\ref{prop:hyperplane} then gives
vanishing. We may therefore assume that both coordinates of $\ell$ are
nonzero.

Choose a Dwyer representation and arbitrary lifts of the two commuting
generators,
\[
C_1,C_2\in\UU_6.
\]
Write
\[
\lambda_h=\alpha_h(z_1),
\qquad h=2,3,4.
\]
Since $\ell$ is not a coordinate axis, all $\lambda_h$ are nonzero and
there is a single $\kappa\in\Fp^\times$ such that
\[
\alpha_h(z_2)=\kappa\lambda_h,
\qquad h=2,3,4.
\]
The endpoint coordinates are zero as well.

Choose an integer representative of $\kappa$ and set
\[
D=C_2C_1^{-\kappa}.
\]
The first superdiagonal of $D$ is zero. Since the commuting relator is
trivial in $\Ubar_6$, the commutator $[C_1,C_2]$ is central. As $C_1$
commutes with its own powers, the commutator $[C_1,D]$ is the same central
element. We are therefore in the setting of
Proposition~\ref{prop:u6}, with
$\lambda_2,\lambda_3,\lambda_4\neq0$. By
Corollary~\ref{cor:contiguous}(a),
\[
[C_1,D]=I.
\]
Hence $C_1$ commutes with $D$, and therefore also with
\[
C_2=DC_1^\kappa.
\]
Thus the commuting relator is exact.

Theorem~\ref{thm:one-relator} removes the remaining Demu\v{s}kin central
defect. Dwyer's criterion then gives the result.
\end{proof}

\begin{corollary}
\label{cor:length-five-reduction}
Let $G\in\Ftwo$. Any defined five-fold Massey product not already covered
by the endpoint or coordinate-hyperplane vanishing theorems and not
covered by Theorem~\ref{thm:full-support} has one of the following
interior supports:
\[
\{2\},\quad
\{3\},\quad
\{4\},\quad
\{2,3\},\quad
\{3,4\},\quad
\{2,4\}.
\]
The last support is the unique two-block pattern.
\end{corollary}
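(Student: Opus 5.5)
The plan is to enumerate the interior zero/nonzero patterns of the $z$-profile for $n=5$ and remove those already handled. Suppose the product is defined and not covered by Proposition~\ref{prop:endpoint}. By Theorem~\ref{thm:block-reduction}(i) we then have $v_1=v_5=0$, so the profile is determined by its interior part $(v_2,v_3,v_4)$. The zero/nonzero pattern of this triple is a subset of $\{2,3,4\}$, giving eight possibilities. These are the eight rows of Table~\ref{tab:intro-five-status}, and I will go through them one by one.

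Next I discard the empty support. If $v_2=v_3=v_4=0$, then together with $v_1=v_5=0$ both coordinates vanish identically. Proposition~\ref{prop:hyperplane} therefore applies; this is also recorded as Theorem~\ref{thm:block-reduction}(ii). The full support $\{2,3,4\}$ is exactly the hypothesis of Theorem~\ref{thm:full-support}, so it is excluded as well.

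The six remaining subsets are $\{2\},\{3\},\{4\},\{2,3\},\{3,4\},\{2,4\}$, which is precisely the list in the statement. One remark on wording: the statement only asserts that the support lies in this list. Any product in these patterns that happens to be covered by the coordinate-hyperplane theorem has already been discarded by hypothesis, so nothing further needs to be shown.

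For the final sentence I will count blocks. The subsets $\{2\},\{3\},\{4\},\{2,3\},\{3,4\}$ are intervals and so form a single support block. The subset $\{2,4\}$ consists of two maximal intervals separated by the zero at position $3$. This matches Theorem~\ref{thm:block-count} with $n=5$ and $r=2$: there $N(5,2)=\binom{4}{4}=1$, and the unique pattern is the alternating one $(0,\bullet,0,\bullet,0)$. None of these steps is a real obstacle, since the argument is a finite case check. The only point needing care is to note that $v_1=v_5=0$ follows from being outside the endpoint theorem, so that supports containing $1$ or $5$ never arise.
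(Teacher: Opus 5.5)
Your proposal is correct and matches the paper, which states this corollary without a written proof as an immediate case check. Endpoint reduction forces $v_1=v_5=0$, the empty interior support falls under Proposition~\ref{prop:hyperplane}, and the full support $\{2,3,4\}$ is Theorem~\ref{thm:full-support}; the six remaining subsets are the list, and $\{2,4\}$ is the only non-interval, consistent with $N(5,2)=\binom{4}{4}=1$ in Theorem~\ref{thm:block-count}.
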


\begin{remark}
Corollary~\ref{cor:contiguous} gives normalized commuting-defect
corrections for $\{3\}$, $\{2,3\}$ and $\{3,4\}$. To upgrade those
calculations to complete vanishing theorems, one must prove that the
chosen correction can be accompanied by corrections of the
Demu\v{s}kin partners without reintroducing a commuting defect.
The singleton supports $\{2\}$ and $\{4\}$ require a separate
calculation from the outset.
\end{remark}

\section{The separated two-block profile}

In length five the only two-block profile is
\[
(v_1,\ldots,v_5)=(0,v_2,0,v_4,0),
\qquad
v_2,v_4\neq0.
\]
It carries an ordered pair of directions
\[
(\ell_L,\ell_R)\in\PP^1(\Fp)\times\PP^1(\Fp).
\]
If $\ell_L\neq\ell_R$, then there are
\[
(p+1)p
\]
ordered distinct-direction projective types.

The support-block formalism does not compare $\ell_L$ and $\ell_R$: the
zero at position $3$ removes the adjacent determinant equation that would
otherwise force equality. This is the first genuinely new projective
phenomenon in the hierarchy.

For a Dwyer representation
\[
\bar\rho:G\to\Ubar_6
\]
choose lifts of the generator matrices. After all relators are imposed
modulo the center, write the two residual central defects as
\[
\omega=(\delta,\gamma)\in\Fp^2,
\]
with $\delta$ corresponding to the Demu\v{s}kin relator and $\gamma$ to
the commuting relator. Theorem~\ref{thm:one-relator} shows that the
essential target is one-dimensional: it is enough to kill $\gamma$.
Once $[z_1,z_2]$ is exact, $\delta$ can be removed automatically.

This reduces the remaining two-block problem to the following question.

\begin{question}
\label{q:two-island}
Let a defined five-fold product have profile
\[
(0,v_2,0,v_4,0),
\qquad
v_2,v_4\neq0.
\]
Can every Dwyer representation be moved, without changing its first
superdiagonal and while keeping the Demu\v{s}kin relator trivial modulo
the center, to one for which
\[
[M_{z_1},M_{z_2}]=I?
\]
\end{question}

A positive answer would settle the distinct-direction two-island case
and, together with the remaining one-block compatibility calculations,
would prove five-fold vanishing for $\Ftwo$.

\section{Arithmetic subfamilies from local fields}

The group-theoretic results above admit a natural arithmetic
specialization. Let $p$ be odd and let $K/\mathbb Q_p$ be a finite
extension containing $\mu_p$. Its maximal pro-$p$ Galois group
\[
D_K=G_K(p)
\]
is a Demu\v{s}kin group with a standard presentation
\[
D_K
=
\left\langle
x_1,y_1,\ldots,x_d,y_d
\ \middle|\
x_1^{q_K}[x_1,y_1]\cdots[x_d,y_d]=1
\right\rangle_{\widehat p},
\]
where $q_K$ is the Demu\v{s}kin invariant \cite{NSW}. Choose distinct
generators $z_1,z_2$ which do not form a Demu\v{s}kin pair and set
\[
G_K(z_1,z_2)
=
D_K/
\overline{\langle\!\langle[z_1,z_2]\rangle\!\rangle}.
\]
Then $G_K(z_1,z_2)\in\Ftwo$.

\begin{corollary}
\label{cor:local}
For every $G_K(z_1,z_2)$ as above:
\begin{enumerate}[label=\textup{(\roman*)}]
\item for every $n\geq3$, exactification of the commuting relator in a
Dwyer lift is sufficient for vanishing of a defined $n$-fold Massey
product;
\item every defined five-fold Massey product with
$v_2,v_3,v_4\neq0$ contains $0$.
\end{enumerate}
\end{corollary}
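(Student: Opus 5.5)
The corollary is a direct specialization, so the plan is to verify that $G_K(z_1,z_2)$ satisfies the hypotheses defining $\Ftwo$ and then quote Theorem~\ref{thm:one-relator} for part~(i) and Theorem~\ref{thm:full-support} for part~(ii).

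\textbf{Membership in $\Ftwo$.} Since $D_K$ is given by the displayed one-relator pro-$p$ presentation, quotienting by the closed normal subgroup generated by $[z_1,z_2]$ gives exactly the two-relator presentation \eqref{eq:presentation}, with $q=q_K$. I then check the parameter conditions one at a time.
\begin{enumerate}[label=\textup{(\alph*)}]
\item The rank condition $d\geq2$ must hold. Because $z_1,z_2$ are distinct and do not form a Demu\v{s}kin pair, they lie in two different pairs $\{x_i,y_i\}$, which forces $d\geq2$.
\item The invariant $q_K$ must have the allowed form. It is the largest power $p^k$ with $\mu_{p^k}\subseteq K$, or $0$ if all $p$-power roots of unity lie in $K$. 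Since $\mu_p\subseteq K$, we get $k\geq1$ with the convention $p^\infty=0$, and this matches the allowed range.
\item The restriction for $p=2$ is vacuous, because $p$ is assumed odd.
\end{enumerate}
This part is routine; the only input needed is the standard description of $q_K$ from \cite{NSW}.

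\textbf{Part (i).} Let $\langle\alpha_1,\ldots,\alpha_n\rangle$ be defined, and suppose some Dwyer lift makes $[M_{z_1},M_{z_2}]$ exact.
\begin{itemize}
\item If the endpoint theorem (Proposition~\ref{prop:endpoint}) applies, the product already contains $0$.
\item Otherwise Theorem~\ref{thm:one-relator} applies verbatim and gives $0\in\langle\alpha_1,\ldots,\alpha_n\rangle$.
\end{itemize}
In the second case the power-preservation step (Lemma~\ref{lem:power-preservation}) is used with $q=q_K$. This is legitimate because $q_K$ is an allowed parameter by (b) above.

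\textbf{Part (ii).} This is Theorem~\ref{thm:full-support} applied to $G=G_K(z_1,z_2)\in\Ftwo$.

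No step is a real obstacle. The only point needing care is checking that $q_K$ lies in the admissible parameter set, since that is what makes the endpoint correction preserve the power term.
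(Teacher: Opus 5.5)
Your proposal is correct and follows the paper's proof, which simply records $G_K(z_1,z_2)\in\Ftwo$ and applies Theorems~\ref{thm:one-relator} and~\ref{thm:full-support}. Your explicit parameter check and the endpoint case split in part~(i) spell out what the paper leaves implicit. One small refinement: a finite extension of $\mathbb{Q}_p$ contains only finitely many roots of unity, so $q_K$ is always a finite $p^k$ with $k\geq1$, and the $q=0$ alternative you mention never actually arises.
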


\begin{proof}
Apply Theorems~\ref{thm:one-relator} and \ref{thm:full-support}.
\end{proof}

This gives a field-parametrized arithmetic subfamily of the
Blumer--Quadrelli variations on which the support-block and one-relator
reductions apply directly. The restriction to odd $p$ here is only to
keep the displayed local Demu\v{s}kin presentation in its standard form;
the group-theoretic theorems for $\Ftwo$ apply to every prime.

\section{Consequences and further directions}

\subsection{Block complexity versus length}

Theorem~\ref{thm:block-count} suggests organizing the all-length problem
by block complexity. Define the $B$-block vanishing property to mean that
every defined Massey product with at most $B$ support blocks contains $0$.
The known all-length results give vanishing on coordinate hyperplanes and
at endpoints, while the first possible $B$-block profile appears in
length $2B+1$.

This hierarchy separates two sources of higher-length complexity:
\begin{enumerate}[label=\textup{(\roman*)}]
\item more possible support blocks;
\item more independent projective directions attached to those blocks.
\end{enumerate}
The first two-block geometry occurs in length five; the first three-block
geometry occurs in length seven.

\subsection{One-relator strategy}

Theorem~\ref{thm:one-relator} gives a complementary organization of the
matrix problem. At every length, one may first solve only
\[
[M_{z_1},M_{z_2}]=I.
\]
The Demu\v{s}kin defect can then be removed afterward by
Lemma~\ref{lem:endpoint-correction}. This removes one row from the
correction problem and avoids simultaneously solving two independent
central equations.

For length five, the remaining tasks are therefore:
\begin{enumerate}
\item prove full-presentation compatibility for the normalized corrections
in supports $\{3\}$, $\{2,3\}$ and $\{3,4\}$;
\item treat the singleton supports $\{2\}$ and $\{4\}$;
\item solve the separated support $\{2,4\}$, first when the two projective
directions coincide and then when they are distinct.
\end{enumerate}

The same organization persists in higher length. The support-block
hierarchy controls which first-superdiagonal patterns can occur, while the
one-relator reduction identifies exact commutation as the remaining target
within this unitriangular reduction strategy.

Taken together, the support-block hierarchy and the one-relator reduction
provide a uniform framework for organizing the higher Massey problem in
\(\mathcal F_2\), both in length five and beyond.

\end{document}